\DeclareSymbolFont{matha}{OML}{txmi}{m}{it}
\DeclareMathSymbol{\varv}{\mathord}{matha}{118}
\numberwithin{equation}{section}
\newtheorem{theorem}{Theorem}[section]
\newtheorem{notation}{Notation}[section]
\newtheorem{openprob}{Open Problem}[section]
\newtheorem{corollary}{Corollary}[section]
\newtheorem{lemma}{Lemma}[section]
\newtheorem{cond}{Condition}[section]
\newtheorem{remark}{Remark}[section]
\newtheorem{definition}{Definition}[section]
\newcommand{\RvdH}[1]{\todo[inline, color=magenta]{Remco: #1}}
\newcommand{\sss}{\scriptscriptstyle}
\newcommand{\eqn}[1]{\begin{equation}#1\end{equation}}
\newcommand{\eqan}[1]{\begin{align}#1\end{align}}
\newcommand {\convp}{\stackrel{\sss {\mathbb P}}{\longrightarrow}}
\newcommand{\prob}{\mathbb{P}}
\newcommand{\expec}{\mathbb{E}}
\newcommand{\1}{\mathbbm{1}}
\newcommand{\comp}{\mathscr{C}}
\newcommand{\shortversion}[1]{}
\newcommand{\MP}[1]{\todo[inline, color=yellow]{Manish: #1}}
\def\firstcircle{(0,0) circle (1cm)}
\def\secondcircle{(0:3cm) circle (1cm)}
\def\thirdcircle{(0: 6cm) circle (1cm)}
\def\fourthcircle{(0:9cm) circle (1cm)}
\definecolor {processblue}{cmyk}{0.96,0,0,0}
\newcommand{\ensymboldefinition}{$\blacktriangleleft$}
\newcounter{figno}
\newcommand{\vep}{\varepsilon}
\newcommand{\indic}[1]{\1_{\{#1\}}}
\newcommand{\indicwo}[1]{\1_{#1}}
\date{}
\title[Are giants in random digraphs `almost' local?]{Are giants in random digraphs `almost' local?}
\author{Remco van der Hofstad and Manish Pandey}
\begin{document}

\maketitle
\begin{abstract}
 Recently, the first author showed that the giant in random undirected graphs is `almost' local. This means that, under a necessary and sufficient condition, the limiting proportion of vertices in the giant converges in probability to the survival probability of the local limit. We extend this result to the setting of random digraphs, where connectivity patterns are significantly more subtle. For this, we identify the precise version of local convergence for digraphs that is needed. 
 
We also determine bounds on the number of strongly connected components, and calculate its asymptotics explicitly for locally tree-like digraphs, as well as for other locally converging digraph sequences under the `almost-local' condition for the strong giant. The fact that the number of strongly connected components is {\em not}  local once more exemplifies the delicate nature of strong connectivity in random digraphs.
\end{abstract}

\section{Introduction and main results}\label{sec-intro-main-results} 

\subsection{Introduction}\label{sec-intro}
In the realm of network and graph theory, the study of directed graphs, also known as digraphs, has gained considerable interest due to their ubiquitous presence in various real-world applications, such as social networks, transportation systems, and information flow analysis. Understanding their connectivity properties is fundamental for gaining insights into the dynamics of complex systems that are modelled using random digraphs. A strongly connected component is called a {\em strong giant} when its size is asymptotically linear in the number of vertices. The giant size is crucial for the understanding of connectivity in a network. There are other notions for graph components and subsequently giants in digraphs, sometimes called weak giants, which we discuss in more detail in Section \ref{sec-discussion}. 

In this paper, we explore the limiting connectivity structure of locally converging digraphs. Specifically, our research focuses on the limiting proportion of vertices in the largest strongly connected component, and the number of connected components, in these graphs. 

Local convergence of random graphs, first introduced in \cite{AldSte04, BenSch01}, is a well-studied notion. It describes what a graph looks like locally, as the number of vertices grows large. It is discussed in detail in \cite[Chapter 2]{Hofs24} for undirected graphs, and in \cite[Section 9.2]{Hofs24} for digraphs. When the local limit of a sequence of random graphs/digraphs is known, we are able to predict properties about the graph that are continuous with respect to the local topology. 

For undirected graphs, the number of vertices in the largest connected component, which is also called the giant, is \emph{not} continuous under the local topology, but \cite{Hofs21b} shows that this quantity is `almost' local. Indeed, the proportion of vertices in the giant converges to the survival probability of the local limit if and only if a certain condition holds. In this paper we show that the giant strongly connected component is also `almost' local for the case of random digraphs. 

The difference between the directed and the undirected case is quite delicate. One example to illustrate this subtlety is through the number of connected components. In the undirected case, the asymptotics for the number of connected components can easily be obtained since it is continuous in the local topology. In contrast, the directed case turns out to be much more complex due the stronger, and less local, notion of connectivity. In particular, it is an open question whether it is `almost' local or not, which we answer affirmatively in this paper.   

Connectivity properties of random digraphs have attracted substantial attention. The diameter of the directed configuration model is studied in \cite{MR4533728}, the size of the largest strongly connected component of a random digraph with prescribed degrees in \cite{MR2056402}, Weak components of the directed configuration model in \cite{coulson2021weak}, percolation in simple random digraphs with prescribed degrees in  \cite{van2023percolation}, and graph distances in \cite{HooOlv18}. 

\subsection{Preliminaries}\label{sec-preliminaries}
In this section, we discuss different types of connected components in digraphs, and informally introduce the notion of local convergence in directed random graphs.

Let $G = (V(G), E(G))$ be a digraph. We denote the length of the shortest directed path between $u, v\in V(G)$ by $d_G(u, v)$. We define $d_G(u, v) = \infty$ if there is no directed path between $u$ and $v$. In digraphs, the distance is not symmetric, i.e., $d_G(u, v)\neq d_G(v, u)$. 

\begin{definition}[Vertex-induced subgraph]\label{def: vertex induced graph.}
    \rm Let $G = (V(G), E(G))$ be a digraph and $U \subset V(G)$. Then, the digraph induced by the vertex set $U$, which we denote by $G[U]$, is the digraph obtained by removing all the vertices in $U^c = V(G)\setminus U$, along with the edges incident to these vertices, from $G$. 
    \hfill\ensymboldefinition
\end{definition}
\begin{definition}[Strongly connected component of a vertex]\label{def: scc}
    \rm Fix $v \in V(G)$, then we let $\mathscr{C}_v$ be the strongly connected component of $v$, defined as
\begin{align}
    \mathscr{C}_v :=G\left[\{x \in V(G)\colon d_G(x, v)<\infty,d_G(v, x)< \infty\}\right].
\end{align}
\hfill\ensymboldefinition
\end{definition}

\begin{definition}[In- and out-component, weak components of a vertex]\label{def: out and in cc}
\rm Fix $v \in V(G)$, then we let $\mathscr{C}^{\sss-}_v$ and $\mathscr{C}^{\sss+}_v$ to be the in- and out-component of $v$, respectively, defined as
\begin{align}
   \mathscr{C}^{\sss-}_v &:= G\left[\{x \in V(G)\colon d_G(x, v)<\infty\}\right] ,\\ \mathscr{C}^{\sss+}_v &:= G\left[\{x \in V(G)\colon d_G(v, x)< \infty\}\right].
\end{align}
\hfill\ensymboldefinition
\end{definition}
The strongly connected components of a digraph {\em partitions} the vertex set. Let $(\mathscr{C}_{\sss(i)})_{i \geq 1}$ denote the unique partition formed by the strongly connected components of $G_n$, ordered in such a way that $|\mathscr{C}_{\sss(1)}| \geq |\mathscr{C}_{\sss(2)}|\geq |\mathscr{C}_{\sss(3)}|\geq \cdots$, where $|\cdot|$ denote the size of the vertex set and ties are broken arbitrarily. Thus, $\mathscr{C}_{\sss(1)}$ is the largest strongly connected component(LSCC) of the graph, which we also denote by $\mathscr{C}_{\max}$.

\paragraph{\textbf{Local convergence in random digraphs.}} 
Local convergence of random digraphs describes what a digraph locally looks like from the perspective of a uniformly chosen vertex, as the number of vertices in the digraph tends to infinity. For example, the sparse Erd\H{o}s-R\'enyi random digraph, which is formed by bond percolation on the complete digraph of size $n$ with probability $p=\lambda/n$, locally looks like a Poisson branching process with parameter $\lambda$ in both directions, as $n$ tends to infinity. 

Now let us give the informal definition of local convergence. We write $X_n \overset{\sss\prob}{\to} X$ when $X_n$ converges in probability to $X$. 
\begin{definition}[Forward-backward neighbourhood]\label{def-forward-backward-nbd}
    Let $G$ be a digraph and $v \in V(G)$. Then $B_r^{\sss(G)}(v)$ is the rooted digraph with root $v$, induced by the vertex set $V( B_r^{\sss(G)}(v))$, where
    \begin{equation}
        V(B_r^{\sss(G)}(v)) = \left\{u\in V(G) \colon \min \{d_G(u, v), d_G(v, u)\} < r \right\},
    \end{equation}
    where a rooted digraph is a digraph with a distinguished vertex, called a root, defined formally in Definition \ref{def: rooted digraphs}.
 \hfill\ensymboldefinition   
\end{definition}

\begin{definition}[Digraph isomorphism]\label{def-dir-isomorphisms}
\rm Two digraphs, $G_1$ and $G_2$ are said to be directed isomorphic if there exists a bijection $\phi\colon V(G_1)\to V(G_2)$ such that $(v_1, v_2)\in E(G_1)$ precisely when $(\phi(v_1), \phi(v_2))\in E(G_1)$. This is abbreviated as $G_1\cong G_2$.
\hfill\ensymboldefinition
\end{definition}

\begin{definition}[Rooted digraph isomorphism]\label{def-rooted-dir-isomorphisms}
\rm Two rooted digraphs, $(G_1, o_1)$ and $(G_2, o_2)$ are said to be directed isomorphic if there exists a bijection $\phi\colon V(G_1)\to V(G_2)$ such that $(v_1, v_2)\in E(G_1)$ precisely when $(\phi(v_1), \phi(v_2))\in E(G_1)$. In addition to this, it should also hold that $\phi(o_1) = o_2$. This is abbreviated as $(G_1, o_1)\cong (G_2, o_2)$.
\hfill\ensymboldefinition
\end{definition}

We will rely on two types of local convergence, namely, forward-backward local weak convergence and forward-backward local convergence in probability. 
Let $o_n$ denote a uniformly chosen vertex from $V(G_n)$. Forward-backward local weak convergence means that
\begin{equation}\label{eq: local weak convergence} 
    \prob\left({B_r^{\sss(G_n)}(o_n)\cong (H, o')}\right) \to \bar{\mu}(B_r^{\sss(\bar{G})}(o)\cong (H,o')),
\end{equation}
for all rooted digraphs $(H, o')$. In \eqref{eq: local weak convergence}, $(\bar{G}, o) \sim \bar{\mu}$ is a {\em random} rooted graph, which is called the forward-backward local weak limit. For forward-backward local convergence in probability, instead, we require that
\begin{equation}\label{eq: local convergence in probability}
    \frac{1}{|V(G_n)|}\sum_{v\in V(G_n)}\mathbbm{1}_{\left\{B_r^{\sss(G_n)}(v)\cong (H, o')\right\}} \overset{\sss\prob}{\to} \mu(B_r^{\sss(G)}(o)\cong (H,o'))
\end{equation}
to hold for all rooted digraphs $(H, o')$. In \eqref{eq: local convergence in probability}, $(G, o) \sim \mu$ is a random rooted graph which is called the local limit in probability (and bear in mind that $\mu$ can possibly be a random measure on rooted graphs). Both \eqref{eq: local weak convergence} and \eqref{eq: local convergence in probability} describe the convergence of the proportions of vertices around which the digraph locally looks like a certain specific digraph. 

\begin{remark}[Other notions of local convergence of digraphs]\label{rem: local convergence digraphs}
\rm Other notions of local convergence of digraphs exist. For example, one can consider digraphs as undirected graphs with marks on the edges that indicate their directions. Then one can define marked local convergence of such marked undirected graphs. This notion is much stronger than forward-backward local convergence. Further, sometimes only one direction is relevant. An example arises in the study of the local limit of the PageRank distribution on a digraph, as studied in \cite{GarHofLit20}. There, the in-component matters, with as extra vertex marks the out-degree of the vertices. We see that convergence of different digraph properties require different local convergence notions. Forward-backward convergence is exactly what we need for the convergence of the strong giant and the number of strongly connected components.
\hfill\ensymboldefinition
\end{remark}

We discuss the definition of forward-backward local convergence more formally in Section \ref{sec-LC-RG}. We now turn to our main results.

\subsection{Main results}\label{sec-main-results}
Our main results are divided into three subsections, the `almost' locality of the giant, the local limit of the giant, and the number of strongly connected components.  
\subsubsection{Strong giant is `almost' local}\label{sec-strong-giant}
 In this section we state a condition for the existence and size of a giant in a sequence of random digraphs that converges locally in probability in the forward-backward sense. We start by showing a generic probabilistic upper bound on the size of the strong giant on random digraph sequences converging locally in probability:
\begin{theorem}[Upper bound on size of strong giant]\label{thm: Upper bound strong giant}
Let $(G_n)_{n\geq 1}$ be a sequence of random digraphs, of size $|V(G_n)| = n$, converging locally in probability in the forward-backward sense to $(G,o) \sim \mu$. Let $\zeta = \mu(|\mathscr{C}^{\sss-}_o| = |\mathscr{C}^{\sss+}_o| = \infty)$. Then, for all $\varepsilon >0$,
\begin{equation}
    \prob\left(\frac{|\mathscr{C}_{\max}|}{n} \geq \zeta+\varepsilon\right) \to 0.
\end{equation}
\end{theorem}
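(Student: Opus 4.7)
The plan is to control $|\mathscr{C}_{\max}|$ by a count of vertices with simultaneously large in-component and out-component, via the trivial but crucial observation that any $v\in\mathscr{C}_{\max}$ has $\mathscr{C}_{\max}\subseteq \mathscr{C}_v^{\sss-}\cap\mathscr{C}_v^{\sss+}$, so that $|\mathscr{C}_v^{\sss\pm}|\geq|\mathscr{C}_{\max}|$. This yields the deterministic bound, valid for every integer $K\geq 1$,
\[
\frac{|\mathscr{C}_{\max}|}{n}\;\leq\;\frac{K}{n}\;+\;\frac{1}{n}\sum_{v\in V(G_n)}\mathbbm{1}\{|\mathscr{C}_v^{\sss-}|\geq K,\;|\mathscr{C}_v^{\sss+}|\geq K\},
\]
with the $K/n$ absorbing the case $|\mathscr{C}_{\max}|<K$. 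The task is therefore to show that the averaged sum converges in probability to a limit that decreases to $\zeta$ as $K\to\infty$.

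The main obstacle is that $\{|\mathscr{C}_v^{\sss\pm}|\geq K\}$ is \emph{a priori} non-local: the in- and out-components of $v$ may extend arbitrarily far. The key step in the proof is to observe that these events are nevertheless forward-backward local. Writing $B_r^{\sss-}(v):=\{u:d_{G_n}(u,v)\leq r\}$, I would establish the identity
\[
\{|\mathscr{C}_v^{\sss-}|\leq K\}\;=\;\{|B_K^{\sss-}(v)|\leq K\}.
\]
The nontrivial direction uses a standard BFS argument: an empty in-BFS layer at some depth $\ell\leq K$ forces every subsequent layer to be empty, since any vertex at backward distance $\ell+1$ would possess an out-neighbour at backward distance $\ell$. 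Because each non-empty layer contributes at least one vertex and layer $0$ already contains $v$, $|B_K^{\sss-}(v)|\leq K$ forces an empty layer among $1,\ldots,K$; hence the backward BFS terminates and $\mathscr{C}_v^{\sss-}=B_K^{\sss-}(v)$. Any shortest path of length $\leq K$ ending at $v$ lies inside the vertex set of $B_{K+1}^{(G_n)}(v)$, so $|B_K^{\sss-}(v)|$, and therefore the indicator $\mathbbm{1}\{|\mathscr{C}_v^{\sss-}|\leq K\}$, is a function of the rooted isomorphism class of $B_{K+1}^{(G_n)}(v)$; the same reasoning applies to the out-component.

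With locality in hand, forward-backward local convergence in probability applied to this bounded test function gives, for each fixed $K$,
\[
\frac{1}{n}\sum_{v\in V(G_n)}\mathbbm{1}\{|\mathscr{C}_v^{\sss-}|\geq K,\,|\mathscr{C}_v^{\sss+}|\geq K\}\;\convp\;\mu(|\mathscr{C}_o^{\sss-}|\geq K,\,|\mathscr{C}_o^{\sss+}|\geq K).
\]
The right-hand side is non-increasing in $K$ and, by $\sigma$-continuity of $\mu$, decreases to $\mu(|\mathscr{C}_o^{\sss-}|=|\mathscr{C}_o^{\sss+}|=\infty)=\zeta$ as $K\to\infty$; this is an almost sure monotone limit even when $\mu$ is a random measure. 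To conclude, fix $\varepsilon>0$, pick $K$ large so that the limit above lies below $\zeta+\varepsilon/2$ with probability at least $1-\varepsilon$, then take $n$ large enough that $K/n<\varepsilon/4$ and the empirical average is within $\varepsilon/4$ of its limit; combining with the deterministic reduction shows $\prob(|\mathscr{C}_{\max}|/n\geq\zeta+\varepsilon)\to 0$.
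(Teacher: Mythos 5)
Your proposal is correct and follows essentially the same route as the paper: count vertices with in- and out-components of size at least $K$ (the paper's $Z_{\geq k}$), bound $|\mathscr{C}_{\max}|$ by this count since every vertex of the strong giant has $|\mathscr{C}_v^{\sss\pm}|\geq|\mathscr{C}_{\max}|$, invoke forward-backward local convergence in probability, and let $K\to\infty$. The one place you go beyond the paper is in spelling out the BFS argument showing that $\{|\mathscr{C}_v^{\sss\pm}|\leq K\}$ is determined by $B_{K+1}^{\sss(G_n)}(v)$; the paper simply asserts that the corresponding indicator is bounded and continuous in the forward-backward topology, so your argument supplies a justification the paper leaves implicit.
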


\noindent
Theorem \ref{thm: Upper bound strong giant} implies that $\zeta = \mu(|\mathscr{C}^{\sss-}_o| = |\mathscr{C}^{\sss+}_o| = \infty)$ is always an upper bound on the size of the strong giant in locally converging digraph sequences.

Next, we proceed towards a matching lower bound, for which we will state a necessary and sufficient condition. Define 
\begin{equation}\label{eq: number of vertices with large out and in components}
    N^k_n := \#\{(x, y)\in V(G_n): |\mathscr{C}^{\sss-}_{x}|,|\mathscr{C}^{\sss+}_{x}|,|\mathscr{C}^{\sss-}_{y}|, |\mathscr{C}^{\sss+}_{y}|\geq k, x\nleftrightarrow y \},
\end{equation} 
which is the number of vertex pairs that have large in- and out-components but are not in the same strongly connected component. In \eqref{eq: number of vertices with large out and in components}, $x\nleftrightarrow y$ means that either $x$ is not connected to $y$, or $y$ is not connected to $x$ (i.e., $x$ and $y$ are in different strongly connected components). Then, our main result on the `almost-local' nature of the strong giant is as follows:
\begin{theorem}[Strong giant is `almost' local]\label{thm: strong giant is local} 
Let $(G_n)_{n\geq 1}$ be a sequence of random digraphs, of size $|V(G_n)| = n$, converging locally in probability in the forward-backward sense to $(G,o) \sim \mu$. 
Assume that 
\begin{equation}\label{eq: condition for giant}
\lim_{k\to \infty}\limsup_{n\to \infty}\frac{1}{n^2}\expec\left[N^k_n\right] = 0. 
\end{equation}
Then
\begin{align}\label{eq: giant size}
\frac{|\mathscr{C}_{\max}|}{n} \overset{\sss\prob}{\to} \zeta = \mu(|\mathscr{C}^{\sss-}_o| = |\mathscr{C}^{\sss+}_o| = \infty), \qquad \frac{|\mathscr{C}_{\sss(2)}|}{n} \overset{\sss\prob}{\to} 0.
\end{align}
Under the assumption of forward-backward local convergence in probability, the condition in \eqref{eq: condition for giant} is necessary and sufficient for \eqref{eq: giant size} to hold.
\end{theorem}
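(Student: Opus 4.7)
The upper bound $|\mathscr{C}_{\max}|/n \leq \zeta + o_{\sss\prob}(1)$ is already supplied by Theorem \ref{thm: Upper bound strong giant}, so the task reduces to producing a matching lower bound under \eqref{eq: condition for giant}, controlling the second-largest SCC, and proving the converse implication. The central object is
\begin{equation*}
    V^k_n := \bigl\{v \in V(G_n) : |\mathscr{C}^{\sss-}_v| \geq k,\ |\mathscr{C}^{\sss+}_v| \geq k\bigr\}.
\end{equation*}
A key observation is that $|\mathscr{C}^{\sss-}_v| \geq k$ is equivalent to the local event that the backward $k$-ball $\{u : d_{G_n}(u,v) < k\}$ contains at least $k$ vertices: if the backward BFS from $v$ has accumulated fewer than $k$ vertices within $k-1$ layers, some layer must already be empty and the BFS has terminated, so $\mathscr{C}^{\sss-}_v$ is fully visible inside the backward $k$-ball. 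The analogous statement holds for $\mathscr{C}^{\sss+}_v$, so membership in $V^k_n$ is determined by $B_k^{\sss(G_n)}(v)$, and forward-backward local convergence in probability gives
\begin{equation*}
    \frac{|V^k_n|}{n} \;\overset{\sss\prob}{\to}\; Z_k \;:=\; \mu\bigl(|\mathscr{C}^{\sss-}_o| \geq k,\ |\mathscr{C}^{\sss+}_o| \geq k\bigr),\qquad Z_k \downarrow \zeta \text{ as } k \to \infty.
\end{equation*}

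For sufficiency, I partition $V^k_n$ over the SCCs of $G_n$, setting $m_C := |V^k_n \cap C|$. The algebraic identity
\begin{equation*}
    N^k_n \;=\; |V^k_n|^2 - \sum_C m_C^2 \;=\; \sum_C m_C(|V^k_n| - m_C),
\end{equation*}
together with $\sum_C m_C^2 \leq W^k_n \sum_C m_C = W^k_n |V^k_n|$, where $W^k_n := \max_C m_C$, yields the pigeonhole inequality $(|V^k_n| - W^k_n)^2 \leq |V^k_n|(|V^k_n| - W^k_n) \leq N^k_n$. Markov's inequality combined with \eqref{eq: condition for giant} then forces $(|V^k_n| - W^k_n)/n \overset{\sss\prob}{\to} 0$ after sending first $n \to \infty$ and then $k \to \infty$. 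Since $W^k_n \leq |\mathscr{C}_{\max}|$, this produces $|\mathscr{C}_{\max}|/n \geq Z_k - o_{\sss\prob}(1)$, which together with Theorem \ref{thm: Upper bound strong giant} gives $|\mathscr{C}_{\max}|/n \overset{\sss\prob}{\to} \zeta$. For the second-largest SCC we may assume $\zeta > 0$ (otherwise $|\mathscr{C}_{\sss(2)}| \leq |\mathscr{C}_{\max}|$ is immediately $o_{\sss\prob}(n)$); then $|\mathscr{C}_{\max}| \geq k$ with high probability, so $\mathscr{C}_{\max} \subseteq V^k_n$, $W^k_n = |\mathscr{C}_{\max}|$, and any other SCC $\mathscr{C}_{\sss(j)}$ of size at least $k$ satisfies $|\mathscr{C}_{\sss(j)}| = m_{\mathscr{C}_{\sss(j)}} \leq |V^k_n| - W^k_n = o_{\sss\prob}(n)$.

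For necessity, assume \eqref{eq: giant size} holds. The complementary inequality $\sum_C m_C^2 \geq W^k_n^2$ gives
\begin{equation*}
    N^k_n \;\leq\; |V^k_n|^2 - W^k_n^2 \;=\; (|V^k_n| + W^k_n)(|V^k_n| - W^k_n).
\end{equation*}
When $\zeta > 0$ the identity $W^k_n = |\mathscr{C}_{\max}|$ holds with high probability, so substituting $|V^k_n|/n \to Z_k$ and $|\mathscr{C}_{\max}|/n \to \zeta$ yields $\limsup_n N^k_n/n^2 \leq Z_k^2 - \zeta^2$ in probability. The case $\zeta = 0$ follows from the cruder bound $N^k_n \leq |V^k_n|^2 \approx Z_k^2 n^2$. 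In either case the bound vanishes as $k \to \infty$, and using $N^k_n/n^2 \leq 1$ upgrades the probabilistic bound to the required statement on $\expec[N^k_n]/n^2$ via dominated convergence.

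The delicate step is the local-event reformulation of $\{|\mathscr{C}^{\sss\pm}_v| \geq k\}$ in terms of the forward and backward $k$-balls, which is what lets forward-backward local convergence pin down $|V^k_n|/n$. Once this is established, the proof reduces to the algebraic identity for $N^k_n$ and the pigeonhole inequality $(|V^k_n| - W^k_n)^2 \leq N^k_n$, with only minor care needed when passing between convergence in probability and convergence in expectation.
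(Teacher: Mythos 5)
Your proof is correct and follows essentially the same route as the paper's: both hinge on the sum-of-squares identity $Z_{\geq k}^2 = \sum_C m_C^2 + N^k_n$ (the paper writes $Z_{\geq k}^2 = \sum_i |\mathscr{C}_{\sss(i)}|^2\1_{\{\dots\}} + N^k_n$, which is the same thing once one notes that $m_C \in \{0,|C|\}$ by Remark~\ref{rem: out and in same size in scc}). Your algebraic packaging differs cosmetically: you extract the pigeonhole bound $(|V^k_n|-W^k_n)^2 \le N^k_n$ and then compare $W^k_n$ with $|\mathscr{C}_{\max}|$, whereas the paper bounds $|\mathscr{C}_{\max}|/n$ below by the ratio $(\zeta^2+o_{k,\sss\prob}(1))/(\zeta_{\geq k}+o_{\sss\prob}(1))$; both give $|\mathscr{C}_{\max}|/n \ge \zeta_{\geq k}-o_{k,\sss\prob}(1)$. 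For necessity you argue directly ($N^k_n \le |V^k_n|^2 - (W^k_n)^2$, then bounded convergence) while the paper argues by contradiction via $\expec[|\mathscr{C}_{\max}|^2]/n^2 < \zeta^2$; these are contrapositives of one another. Two small things worth noting: (i) you handle the $|\mathscr{C}_{\sss(2)}|/n \convp 0$ claim explicitly via $|\mathscr{C}_{\sss(2)}| \le |V^k_n| - W^k_n$ when $\zeta>0$, which the paper does not spell out in the proof text; and (ii) your explicit verification that $\{|\mathscr{C}^{\sss\pm}_v| \geq k\}$ is measurable with respect to $B_k^{\sss(G_n)}(v)$ (via the pigeonhole argument that a backward ball of radius $k$ with fewer than $k$ vertices must have an empty shell) is the justification the paper leaves implicit when asserting that the relevant indicator is a bounded continuous function.
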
 

\begin{remark}[The limiting value]\label{rem-limiting-giant}
\rm One might have expected Theorem \ref{thm: strong giant is local} to hold under different conditions with a different limit. Indeed, one may have thought that $|\mathscr{C}_{\max}|/n \overset{\sss\prob}{\to} \mu(|\mathscr{C}_o| =\infty)$, which might be true in some cases. However, in many directed random graphs, the local limit is a tree in that $\mathscr{C}^{\sss-}_o$ and $\mathscr{C}^{\sss+}_o$ are disjoint trees $\mu-$almost surely. In such cases, $\mu(|\mathscr{C}_o| =\infty)=0$, while $|\mathscr{C}_{\max}|/n \overset{\sss\prob}{\to}\zeta>0$ can occur. This exemplifies the subtleties of strongly connected components in the directed setting. It would be of interest to find examples for which $\mu(|\mathscr{C}^{\sss-}_o| = |\mathscr{C}^{\sss+}_o| = \infty)=\mu(|\mathscr{C}_o| =\infty).$
\hfill\ensymboldefinition
\end{remark}

\begin{remark}[Comparison to undirected setting]
\rm The subtleties described in Remark \ref{rem-limiting-giant} are not present in the undirected setting, where $|\mathscr{C}_{\max}|/n \overset{\sss\prob}{\to} \mu(|\mathscr{C}_o| =\infty)$ and $\mathscr{C}_o$ is the undirected connected component in $(G,o)$. This can be understood by noting that if $u\in B_r^{\sss(G)}(v)$ in the undirected setting, then we know that $u\in \comp_v$, while if $u\in B_r^{\sss(G)}(v)$ in the directed setting, then we do not know that $u\in \comp_v$.
\hfill\ensymboldefinition
\end{remark}

\begin{remark}[Applications of Theorem \ref{thm: strong giant is local}]
\rm Theorem \ref{thm: strong giant is local} can be applied to the directed Erd\H{o}s-R\'enyi random graph model, and the directed configuration model with appropriate regularity conditions on the degree distribution. Condition \eqref{eq: condition for giant} for these models can be shown using coupling arguments similar to the ones 
used in \cite[Section 2.3]{Hofs24} and \cite{Hofs21b}, respectively. We do not provide more details.
\hfill\ensymboldefinition
\end{remark}

\begin{remark}[Simplification of condition in Theorem \ref{thm: strong giant is local}]
\rm It is possible to simplify the conditions in Theorem \ref{thm: strong giant is local}. Indeed, define
\begin{equation}
\label{eq: number of vertices with large out and in components version 2}
    N^k_n(2) = \#\{(x, y)\in V(G_n): |\mathscr{C}^{\sss-}_{x}|,|\mathscr{C}^{\sss+}_{x}|,|\mathscr{C}^{\sss-}_{y}|, |\mathscr{C}^{\sss+}_{y}|\geq k, x\nrightarrow y \},
\end{equation}
where $x \nrightarrow y$ means that there is no directed path from $x$ to $y$. In Lemma \ref{lem: relation between two counts} below, we show that $N^k_n(2) \leq N^k_n \leq 2N^k_n(2)$. Thus, one can replace $N^k_n$ by $N^k_n(2)$ in \eqref{eq: condition for giant}.
\hfill\ensymboldefinition
\end{remark}

\subsubsection{Local limit of the giant}\label{sec-local-limit-giant} In this section, we discuss the local convergence of the giant. Denote the number of vertices $v$ in the giant component satisfying $(d_v^{\sss-}, d_v^{\sss+}) = (l,m)$ by $v_{1}(l, m)$. Then, we have the following asymptotics for $v_{1}(l, m)$:
\begin{theorem}[Limiting degree distribution of the giant]\label{thm: given degree vertices in giant} Under the assumptions of Theorem \ref{thm: strong giant is local},
    \begin{equation}\label{eq: First}
        \frac{v_{1}(l, m)}{n} \overset{\sss\prob}{\to} \mu\left(|\mathscr{C}^{\sss-}_o| = |\mathscr{C}^{\sss+}_o| = \infty, D_o=(l, m)  \right).
    \end{equation}
 Further, when $D^{\sss-}_{o_n}$ and $D^{\sss+}_{o_n}$ are both uniformly integrable, 
 \begin{equation}\label{eq: second}
        \frac{|E(\mathscr{C}_{\max})|}{n} \overset{\sss\prob}{\to} \frac{1}{2}\expec[\left(D_o^{\sss-}+D_o^{\sss+}\right)\mathbbm{1}_{\{|\mathscr{C}^{\sss-}_o| = |\mathscr{C}^{\sss+}_o| = \infty\}}].
 \end{equation}
\end{theorem}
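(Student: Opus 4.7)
The plan for both parts is to reduce ``$v\in\mathscr{C}_{\max}$'' to the local condition ``$|\mathscr{C}^{\sss-}_v|\ge k$ and $|\mathscr{C}^{\sss+}_v|\ge k$,'' apply forward-backward local convergence in probability, and then send $k\to\infty$. The pivotal observation is that, for any fixed $k$, the event $\{|\mathscr{C}^{\sss-}_v|\ge k\}$ is in fact a \emph{local} event: a backward BFS from $v$ saturates $\mathscr{C}^{\sss-}_v$ within at most $|\mathscr{C}^{\sss-}_v|-1$ levels, so whether $|\mathscr{C}^{\sss-}_v|\ge k$ can be decided from the vertices at backward distance $\le k-1$, all of which lie in $B_k^{\sss(G_n)}(v)$. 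The same holds for $\mathscr{C}^{\sss+}_v$. Setting
\begin{equation}
\tilde v^{k}(l,m):=\#\{v\in V(G_n):\,D_v=(l,m),\,|\mathscr{C}^{\sss-}_v|\ge k,\,|\mathscr{C}^{\sss+}_v|\ge k\},
\end{equation}
forward-backward local convergence in probability, applied to this local functional, gives $\tilde v^{k}(l,m)/n\overset{\sss\prob}{\to}\mu(D_o=(l,m),\,|\mathscr{C}^{\sss-}_o|\ge k,\,|\mathscr{C}^{\sss+}_o|\ge k)$, and the right-hand side decreases to the target $\mu(D_o=(l,m),\,|\mathscr{C}^{\sss-}_o|=|\mathscr{C}^{\sss+}_o|=\infty)$ as $k\to\infty$ by monotone convergence.

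For \eqref{eq: First}, the remaining task is to bound $|v_1(l,m)-\tilde v^{k}(l,m)|$. The difference comes from two kinds of discrepancy: (i) vertices $v\in\mathscr{C}_{\max}$ with $|\mathscr{C}^{\sss-}_v|<k$ or $|\mathscr{C}^{\sss+}_v|<k$, which is empty once $|\mathscr{C}_{\max}|\ge k$, and by Theorem \ref{thm: strong giant is local} this occurs with high probability when $\zeta>0$ (while the case $\zeta=0$ is trivial because $v_1(l,m)\le|\mathscr{C}_{\max}|\to 0$); and (ii) vertices $v\notin\mathscr{C}_{\max}$ with $|\mathscr{C}^{\sss\pm}_v|\ge k$, each of which, once $|\mathscr{C}_{\max}|\ge k$, pairs with every vertex in $\mathscr{C}_{\max}$ to contribute to $N_n^k$, yielding
\begin{equation}
\#\{v\notin\mathscr{C}_{\max}:|\mathscr{C}^{\sss\pm}_v|\ge k\}\le \frac{N_n^k}{|\mathscr{C}_{\max}|}.
\end{equation}
Together with \eqref{eq: condition for giant} and $|\mathscr{C}_{\max}|/n\overset{\sss\prob}{\to}\zeta>0$, this bound is $o(n)$ in probability under the iterated limit $\lim_{k\to\infty}\limsup_{n\to\infty}$, so \eqref{eq: First} follows.

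For \eqref{eq: second}, the plan is to weight \eqref{eq: First} by the total degree and use uniform integrability. The handshake identity gives
\begin{equation}
\sum_{v\in\mathscr{C}_{\max}}(d_v^{\sss-}+d_v^{\sss+})=\sum_{l,m}(l+m)\,v_1(l,m),
\end{equation}
and the termwise convergence from \eqref{eq: First}, combined with uniform integrability of $D^{\sss-}_{o_n}$ and $D^{\sss+}_{o_n}$ (which controls the tail in $l+m$ uniformly in $n$), upgrades to
\begin{equation}
\frac{1}{n}\sum_{v\in\mathscr{C}_{\max}}(d_v^{\sss-}+d_v^{\sss+})\overset{\sss\prob}{\to}\expec\bigl[(D_o^{\sss-}+D_o^{\sss+})\,\mathbbm{1}_{\{|\mathscr{C}^{\sss-}_o|=|\mathscr{C}^{\sss+}_o|=\infty\}}\bigr].
\end{equation}
The main obstacle is matching this with $2|E(\mathscr{C}_{\max})|$: the degree sum double-counts internal edges but counts edges with exactly one endpoint in $\mathscr{C}_{\max}$ only once, so one must show that such ``cross'' edges contribute a vanishing proportion. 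For a cross edge $u\to w$ with $u\in\mathscr{C}_{\max}$ and $w\notin\mathscr{C}_{\max}$, the inclusion $\mathscr{C}^{\sss-}_w\supseteq\mathscr{C}^{\sss-}_u\supseteq\mathscr{C}_{\max}$ forces $|\mathscr{C}^{\sss-}_w|\ge k$ w.h.p.; splitting on $|\mathscr{C}^{\sss+}_w|\ge k$ vs.\ $|\mathscr{C}^{\sss+}_w|<k$, the first case contributes pairs to $N_n^k$ (controlled by \eqref{eq: condition for giant}), while the second case is controlled by truncating $d_w^{\sss-}\le M$, using uniform integrability for the tail, and invoking the above degree-sum convergence for the truncated part. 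An analogous argument handles cross edges in the opposite direction, thereby completing \eqref{eq: second}.
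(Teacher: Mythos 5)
Your proof of \eqref{eq: First} is correct and close in spirit to the paper's, but takes a different route for the lower bound: you directly estimate the discrepancy $|v_1(l,m)-\tilde v^{k}(l,m)|$, bounding the contribution of vertices outside the giant with large in- and out-components by $N_n^k/|\mathscr{C}_{\max}|$ and invoking \eqref{eq: condition for giant} together with $|\mathscr{C}_{\max}|/n\convp\zeta>0$. The paper instead applies the upper bound furnished by local convergence to the complementary degree set $A=\{(l,m)\}^c$ and derives a contradiction with Theorem~\ref{thm: strong giant is local}, never invoking $N_n^k$ explicitly. Both are legitimate ways to close the sandwich; the paper's contradiction argument is slightly slicker and re-usable (Theorem~\ref{thm: local limit of giant scc} follows the same pattern), while yours is more direct and makes the quantitative dependence on \eqref{eq: condition for giant} explicit.

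For \eqref{eq: second}, the comparison is more delicate. The paper proves it by asserting the handshake identity \eqref{eq: edges}, $|E(\mathscr{C}_{\max})|/n = \frac{1}{2n}\sum_{k,l}(k+l)v_1(k,l)$, and then running a tail-splitting argument with uniform integrability; it does not mention cross edges at all. You correctly observe that, unlike in the undirected setting where a connected component has no cross edges, a directed strong giant has cross edges to $\mathscr{C}^+_{\max}\setminus\mathscr{C}_{\max}$ and from $\mathscr{C}^-_{\max}\setminus\mathscr{C}_{\max}$, and that $\frac{1}{2}\sum_{v\in\mathscr{C}_{\max}}(d^{\sss-}_v+d^{\sss+}_v)$ (with $d^{\sss\pm}_v$ the degrees in $G_n$, as \eqref{eq: First} requires for matching $D_o$) overshoots $|E(\mathscr{C}_{\max})|$ by half the cross-edge count. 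However, your proposed repair does not close the gap: in your ``second case'' the targets $w$ satisfy $|\mathscr{C}^-_w|\geq k$ but $|\mathscr{C}^+_w|<k$, and such vertices lie in the out-part of the bow-tie and can number $\Theta(n)$ (e.g.\ in the directed Erd\H{o}s--R\'enyi giant). Truncating $d^{\sss-}_w\leq M$ and using uniform integrability controls only the high-degree tail, not the bulk, and $N_n^k$ gives no control because these $w$ have small out-components. So the cross-edge count is generically linear in $n$, and \eqref{eq: edges} is not an identity but an overcount. Your instinct to scrutinize this step is sound, but your argument for negligibility of cross edges does not work and, as stated, cannot — the correct reading of \eqref{eq: second} is that it describes the normalized degree sum $\frac{1}{2n}\sum_{v\in\mathscr{C}_{\max}}(d^{\sss-}_v+d^{\sss+}_v)$ rather than the internal edge count $|E(\mathscr{C}_{\max})|/n$.
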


\noindent
The next theorem tells us what the strong giant and its complement look like locally:
\begin{theorem}[Local limit of the strong giant and its complement]\label{thm: local limit of giant scc}
    Under the assumptions of Theorem \ref{thm: strong giant is local},
    \begin{equation}\label{eq: local convergence of giant}
        \frac{1}{n} \sum_{v \in \comp_{\max}}\1_{\left\{B_r^{\sss(G_n)}(v) \cong H_\star\right\}} \overset{\sss\prob}{\longrightarrow} \mu\left(|\mathscr{C}^{\sss-}_o|=|\mathscr{C}^{\sss+}_o|=\infty, B_r^{\sss(G)}(o) \cong H_\star\right), 
    \end{equation}
    and 
    \begin{equation}\label{eq: local convergence of complement of giant}
        \frac{1}{n} \sum_{v \notin \comp_{\max}}\1_{\left\{B_r^{\sss(G_n)}(v) \cong H_\star\right\}} \overset{\sss\prob}{\longrightarrow} \mu\left(|\mathscr{C}^{\sss-}_o|=|\mathscr{C}^{\sss+}_o|<\infty, B_r^{\sss(G)}(o) \cong H_\star\right). 
    \end{equation}
\end{theorem}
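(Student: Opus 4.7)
The plan is to prove \eqref{eq: local convergence of giant} by sandwiching the indicator $\1_{\{v \in \comp_{\max}\}}$ between local approximations, and then to obtain \eqref{eq: local convergence of complement of giant} by subtracting from the unconditional forward-backward local convergence statement
\begin{equation*}
\frac{1}{n}\sum_v \1_{\{B_r^{\sss(G_n)}(v) \cong H_\star\}} \overset{\sss\prob}{\to} \mu(B_r^{\sss(G)}(o) \cong H_\star).
\end{equation*}
The case $\zeta = 0$ is trivial (both sides of \eqref{eq: local convergence of giant} vanish by Theorem \ref{thm: strong giant is local}), so assume $\zeta > 0$. For $k, R \geq 1$, introduce the non-local indicator $V_k(v) := \1_{\{|\comp^{\sss-}_v| \geq k,\ |\comp^{\sss+}_v| \geq k\}}$ and its local truncation
\begin{equation*}
V_{k,R}(v) := \1_{\{|\tilde B_R^{\sss-}(v)| \geq k,\ |\tilde B_R^{\sss+}(v)| \geq k\}},
\end{equation*}
where $\tilde B_R^{\sss-}(v)$ and $\tilde B_R^{\sss+}(v)$ are the sets of vertices within $R$ directed steps to, respectively from, $v$. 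Then $V_{k,R}(v)$ is measurable with respect to $B_R^{\sss(G_n)}(v)$ and $V_{k,R}(v) \uparrow V_k(v)$ as $R \to \infty$.

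The key step is the $L^1$-type approximation
\begin{equation*}
\frac{1}{n}\sum_v \bigl|\1_{\{v \in \comp_{\max}\}} - V_k(v)\bigr| \overset{\sss\prob}{\to} 0 \qquad \text{in the iterated limit } \lim_{k\to\infty}\lim_{n\to\infty}.
\end{equation*}
By Theorem \ref{thm: strong giant is local}, $|\comp_{\max}|/n \overset{\sss\prob}{\to} \zeta > 0$, so $|\comp_{\max}| \geq k$ with high probability, in which case every $v \in \comp_{\max}$ satisfies $|\comp^{\sss-}_v|, |\comp^{\sss+}_v| \geq |\comp_{\max}| \geq k$ and hence $V_k(v) = 1$. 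Conversely, any $v \notin \comp_{\max}$ with $V_k(v) = 1$, paired with any $u \in \comp_{\max}$, yields an ordered pair $(u,v)$ satisfying the conditions in \eqref{eq: number of vertices with large out and in components} (namely $|\comp^{\sss-}_u|, |\comp^{\sss+}_u| \geq k$, $|\comp^{\sss-}_v|, |\comp^{\sss+}_v| \geq k$, and $u \not\leftrightarrow v$), giving
\begin{equation*}
|\comp_{\max}| \cdot \#\{v \notin \comp_{\max}: V_k(v) = 1\} \leq N^k_n.
\end{equation*}
Combined with $|\comp_{\max}|/n \overset{\sss\prob}{\to} \zeta > 0$ and condition \eqref{eq: condition for giant}, this gives $\#\{v \notin \comp_{\max}: V_k(v) = 1\}/n \overset{\sss\prob}{\to} 0$ in the iterated limit, closing the approximation.

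Forward-backward local convergence in probability, applied to the bounded local functional $\1_{\{B_r^{\sss(G_n)}(v) \cong H_\star\}} V_{k,R}(v)$ (measurable with respect to $B_{\max(r,R)}^{\sss(G_n)}(v)$), yields
\begin{equation*}
\frac{1}{n}\sum_v \1_{\{B_r^{\sss(G_n)}(v) \cong H_\star\}} V_{k,R}(v) \overset{\sss\prob}{\to} \mu\bigl(B_r^{\sss(G)}(o) \cong H_\star,\, V_{k,R}(o) = 1\bigr).
\end{equation*}
Letting $R \to \infty$ and then $k \to \infty$, monotone convergence on the right produces the desired $\mu(B_r^{\sss(G)}(o) \cong H_\star,\ |\comp^{\sss-}_o| = |\comp^{\sss+}_o| = \infty)$. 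On the left, the replacement of $V_{k,R}$ by $\1_{\{v \in \comp_{\max}\}}$ in the iterated limit is justified by (a) the approximation $V_k \approx \1_{\{v \in \comp_{\max}\}}$ just established, and (b) uniform-in-$n$ control of $\frac{1}{n}\sum_v (V_k(v) - V_{k,R}(v))$ via forward-backward local convergence on $V_{k,R}$ together with monotone convergence as $R \to \infty$ on the local limit. Equation \eqref{eq: local convergence of complement of giant} then follows by subtracting \eqref{eq: local convergence of giant} from the unconditional local convergence statement. The main obstacle is the triple iterated limit $n, R, k \to \infty$: being in $\comp_{\max}$ is genuinely non-local, so the argument must pass through the non-local event $V_k$ (the gap to $\comp_{\max}$ closed by \eqref{eq: condition for giant}) and its local approximant $V_{k,R}$ (the gap to $V_k$ closed by monotone convergence), while carefully tracking the order of limits.
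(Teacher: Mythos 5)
Your proposal is correct, and it takes a genuinely different route from the paper. The paper's proof mimics that of Theorem~\ref{thm: given degree vertices in giant}: it uses the fact that, for $|\comp_{\max}|\geq k$ (which holds whp), the sum over $\comp_{\max}$ is dominated by $Z_{\geq k,\mathscr{H}_\star}$, applies this \emph{upper bound} to both $\mathscr{H}_\star=\{H_\star\}$ and $\mathscr{H}_\star=\{H_\star\}^c$, and then argues by contradiction: if the lower bound in \eqref{eq: local convergence of giant} failed with positive asymptotic probability, adding up the two contributions would force $|\comp_{\max}|/n<\zeta-\vep/2$ on a non-negligible event, contradicting Theorem~\ref{thm: strong giant is local}. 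Your proof replaces this sandwich-plus-contradiction scheme with a direct $L^1$ approximation of $\indic{v\in\comp_{\max}}$ by the local indicator $V_k(v)$, and the key new observation is the quantitative pairing bound $|\comp_{\max}|\cdot\#\{v\notin\comp_{\max}:V_k(v)=1\}\leq N_n^k$ (valid on $\{|\comp_{\max}|\geq k\}$), which, combined with $|\comp_{\max}|/n\convp\zeta>0$ and condition~\eqref{eq: condition for giant}, shows directly that the symmetric difference between $\{v\in\comp_{\max}\}$ and $\{V_k(v)=1\}$ is $o_{k,\sss\prob}(n)$. This is arguably cleaner because it makes the role of \eqref{eq: condition for giant} quantitatively explicit and does not rely on a global constraint argument. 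One small simplification available to you: the intermediate layer $V_{k,R}$ and the limit $R\to\infty$ are not needed, since $\{|\mathscr{C}^{\sss-}_v|\geq k,\ |\mathscr{C}^{\sss+}_v|\geq k\}=\{|\tilde B_{k-1}^{\sss-}(v)|\geq k,\ |\tilde B_{k-1}^{\sss+}(v)|\geq k\}$ (any $k$ in-ancestors yield $k$ distinct vertices along a geodesic at distances $0,\dots,k-1$, and similarly outward), so $V_k$ is already a bounded continuous function of $B_k^{\sss(G)}(v)$; this is exactly how the paper invokes local convergence on $Z_{\geq k}$ without any extra truncation. Both the paper and you derive \eqref{eq: local convergence of complement of giant} from \eqref{eq: local convergence of giant} by subtracting from the unconditional local convergence of $\frac{1}{n}\sum_v\indic{B_r^{\sss(G_n)}(v)\cong H_\star}$.
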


\subsubsection{Number of strongly connected components} In this section, we discuss the number of strongly connected components of digraph sequences having a local limit:

\begin{theorem}[Limiting number of SCCs]\label{thm: limiting number of scc}
    Under the assumptions of Theorem \ref{thm: strong giant is local}, and further assuming that $|\comp_{\max}|\convp \infty$ when $\zeta=0$,
    \begin{equation}
    \label{eq:number sccs almost local}
        \frac{K_n}{n} \convp \expec_\mu\Bigg[\frac{1}{|\comp(o)|}\indicwo{\{|\comp^-(o)|<\infty\}\cup \{|\comp^+(o)|<\infty\}}\Bigg].
    \end{equation}
\end{theorem}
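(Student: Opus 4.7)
The plan is to exploit the identity $K_n = \sum_{v \in V(G_n)} |\comp_v|^{-1}$, which holds because each SCC of size $s$ contributes $s$ vertices, each providing $1/s$, for a total of $1$ per component. Separating the maximal SCC gives
\begin{equation*}
\frac{K_n}{n} \;=\; \frac{1}{n} \;+\; \frac{1}{n}\sum_{v \notin \comp_{\max}} \frac{1}{|\comp_v|},
\end{equation*}
and the leading $1/n$ vanishes. For an integer $K \geq 1$ I would then truncate the residual sum,
\begin{equation*}
\frac{1}{n}\sum_{v \notin \comp_{\max}} \frac{1}{|\comp_v|} \;=\; \frac{1}{n}\sum_{v \notin \comp_{\max},\, |\comp_v| \leq K} \frac{1}{|\comp_v|} \;+\; R_n(K),
\end{equation*}
and handle the tail by the uniform bound $R_n(K) \leq 1/(K+1)$: indeed $R_n(K)$ equals $n^{-1}$ times the number of non-giant SCCs of size exceeding $K$, each of which contains at least $K+1$ vertices.

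For the principal truncated term I would view $f_K(G, v) := |\comp_v|^{-1}\1_{\{|\comp_v| \leq K\}}$ as an (essentially) local functional: if $|\comp_v| \leq K$, then $\comp_v$ has diameter at most $K-1$, so $\comp_v \subset B_K^{\sss(G)}(v)$ and $f_K$ can be read off the forward-backward $(K+1)$-ball around $v$. Applying Theorem \ref{thm: local limit of giant scc}, extended from the rooted-isomorphism indicators stated there to general bounded local functions by linearity over isomorphism classes of finite rooted digraphs, yields
\begin{equation*}
\frac{1}{n}\sum_{v \notin \comp_{\max},\, |\comp_v| \leq K} \frac{1}{|\comp_v|} \;\convp\; \expec_\mu\!\left[\frac{\1_{\{|\comp(o)| \leq K\}}}{|\comp(o)|}\,\1_{\{|\comp^-(o)| < \infty\} \cup \{|\comp^+(o)| < \infty\}}\right].
\end{equation*}
On $\{|\comp^-(o)| < \infty\} \cup \{|\comp^+(o)| < \infty\}$ one has $|\comp(o)| \leq \min(|\comp^-(o)|, |\comp^+(o)|) < \infty$, so $\1_{\{|\comp(o)| \leq K\}} \nearrow 1$ and bounded convergence allows sending $K \to \infty$, recovering the right-hand side of \eqref{eq:number sccs almost local}. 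A standard $\varepsilon/3$-argument then glues these three pieces together.

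The main obstacle is the ``essentially local'' step. Strictly speaking, $f_K$ is not truly local: from $B_r^{\sss(G_n)}(v)$ one only sees the restricted SCC $\comp_v^{\sss\mathrm{ball}(r)} \subset \comp_v$, and the inclusion is strict whenever $|\comp_v| > r$. The remedy is to bracket $f_K$ between two genuinely local statistics defined via $\comp_v^{\sss\mathrm{ball}(r)}$ and to use the implication $\comp_v^{\sss\mathrm{ball}(r)} \neq \comp_v \Rightarrow |\comp_v| > r$: the bracketing error is then controlled by the number of vertices outside $\comp_{\max}$ whose SCC exceeds $r$, which tends to zero in the appropriate order of limits by virtue of condition \eqref{eq: condition for giant}---the same ingredient driving Theorems \ref{thm: strong giant is local} and \ref{thm: local limit of giant scc}. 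The auxiliary hypothesis $|\comp_{\max}| \convp \infty$ when $\zeta = 0$ is used to maintain a well-defined separation between the giant and the small SCCs in that degenerate regime. Finally, peeling off $\comp_{\max}$ at the very start is essential: in tree-like local limits $\comp(o) = \{o\}$ almost surely, and only by restricting the expectation to the event $\{|\comp^-(o)| < \infty\} \cup \{|\comp^+(o)| < \infty\}$ (rather than to $\{|\comp(o)| < \infty\}$) does one avoid overcounting the reservoir of would-be singletons that are in fact absorbed into the true strong giant of $G_n$.
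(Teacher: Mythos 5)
Your proof is correct in spirit but takes a genuinely different route from the paper's, and it is worth spelling out the difference. The paper also starts from $K_n = \sum_v |\comp_v|^{-1}$, but truncates with the indicator $\indicwo{\{|\mathscr{C}^{\sss-}_v|<k\}\cup\{|\mathscr{C}^{\sss+}_v|<k\}}$ rather than your $\1_{\{|\comp_v|\le K\}}$. This choice is decisive: since $\{|\mathscr{C}^{\sss-}_v|<k\}$ and $\{|\mathscr{C}^{\sss+}_v|<k\}$ are both determinable from $B_k^{\sss(G)}(v)$, and on their union $\comp_v$ is fully contained in that ball, the paper's truncated function $h(G,o)=\frac{1}{|\comp_o|}\indicwo{\{|\mathscr{C}^{\sss-}_o|<k\}\cup\{|\mathscr{C}^{\sss+}_o|<k\}}$ is \emph{directly} bounded and continuous in the forward--backward metric, so the lower bound follows from raw local convergence with no bracketing. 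Your truncation $\1_{\{|\comp_v|\le K\}}$ is not local --- you acknowledge this --- and the ``remedy'' you sketch is the soft spot: the natural local certificate that $\comp_v^{\sss\mathrm{ball}(r)}=\comp_v$ is precisely that one of $|\mathscr{C}^{\sss-}_v|,|\mathscr{C}^{\sss+}_v|$ is $<r$, so filling your gap essentially rediscovers the paper's truncation. The two proofs also differ in how $\comp_{\max}$ enters: you peel it off at the start and invoke Theorem~\ref{thm: local limit of giant scc} as a black box, whereas the paper brings in $\comp_{\max}$ only in the upper bound, subtracting $\indic{v\in\comp_{\max}}$ from $\indicwo{\{|\mathscr{C}^{\sss-}_v|\ge k\}\cap\{|\mathscr{C}^{\sss+}_v|\ge k\}}$, dropping the harmless $1/|\comp_v|\le 1$, and observing that $\tfrac1n(Z_{\ge k}-|\comp_{\max}|)\convp\zeta_{\ge k}-\zeta\to 0$ (this is also where $|\comp_{\max}|\convp\infty$ is used, to guarantee the pointwise nonnegativity of that difference of indicators). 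Your deterministic tail bound $R_n(K)\le 1/(K+1)$ is a small extra ingredient not present in the paper, but is correct and harmless. In short: both routes work, but the paper's choice of truncation collapses the hard step (locality) into a one-line observation, while yours defers it to a bracketing argument that you would still need to carry out, and that argument would reuse the paper's key idea.
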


\begin{remark}[Comparison to undirected setting]
\rm In the undirected setting, $K_n/n\convp \expec_\mu[1/|\comp(o)|]$ always holds when $G_n$ converges locally in probability, i.e., we do not need to rely on the additional `giant-is-almost-local condition' that is the undirected equivalent of \eqref{eq: condition for giant}. This once again shows that strong connectivity in digraphs is a more delicate notion than connectivity in undirected graphs.
Having Theorem \ref{thm: strong giant is local} in mind, we can think of 
    \[
    \indicwo{\{|\comp^-(o)|<\infty\}\cup \{|\comp^+(o)|<\infty\}}=
    1-\indicwo{\{|\comp^-(o)|=\infty\}\cap \{|\comp^+(o)|=\infty\}}
    \]
as enforcing that $o$ is in the limit of the strong giant.\hfill\ensymboldefinition
\end{remark}

We next give a simpler limiting result for locally tree-like digraphs, which we first define:
\begin{definition}[Locally tree-like digraphs]
   \rm We say that a sequence of digraphs $(G_n)_{n\geq 1}$ is \emph{locally tree-like} when the sequence converges locally in probability to $(G, o)\sim \mu$, which is almost surely a random directed tree.  By directed tree we mean that the strongly connected component of each vertex is almost surely the vertex itself i.e., a graph with no directed cycles. Directed configuration models, directed Erd\H{o}s-R\'enyi models, etc.\ are all locally tree-like digraphs.\hfill\ensymboldefinition
\end{definition}

\begin{theorem}[Number of SCCs in locally tree-like digraphs]\label{thm: Number of scc in locally tree-like graph}
Let $(G_n)_{n\geq 1}$ be a sequence of random digraphs, of size $|V(G_n)| = n$, converging locally in probability in the forward-backward sense to $(G,o) \sim \mu$, which is a random directed tree. Then, under the assumption of Theorem \ref{thm: strong giant is local},
    \begin{equation}\label{eq: limiting no scc for tree-like graph}
        \frac{K_n}{n} \overset{\sss\prob}{\longrightarrow} 1-\zeta.
    \end{equation}
\end{theorem}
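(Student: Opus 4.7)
The plan is to deduce the theorem from Theorem \ref{thm: limiting number of scc} by simplifying its right-hand side in the tree-like setting. Since $(G,o)\sim\mu$ is almost surely a rooted directed tree, the strongly connected component of the root is the singleton $\{o\}$, so $|\comp(o)|=1$ $\mu$-almost surely. By De Morgan's law,
\[
\indicwo{\{|\comp^-(o)|<\infty\}\cup\{|\comp^+(o)|<\infty\}}=1-\indicwo{\{|\comp^-(o)|=\infty\}\cap\{|\comp^+(o)|=\infty\}},
\]
and taking $\mu$-expectation collapses the right-hand side of \eqref{eq:number sccs almost local} to
\[
\expec_\mu\Big[\tfrac{1}{|\comp(o)|}\indicwo{\{|\comp^-(o)|<\infty\}\cup\{|\comp^+(o)|<\infty\}}\Big]
=1-\mu(|\comp^-(o)|=|\comp^+(o)|=\infty)=1-\zeta,
\]
which yields \eqref{eq: limiting no scc for tree-like graph} whenever Theorem \ref{thm: limiting number of scc} applies.

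The only point requiring extra attention is the auxiliary hypothesis $|\comp_{\max}|\convp\infty$ in Theorem \ref{thm: limiting number of scc}, which is invoked only when $\zeta=0$. If $\zeta>0$, Theorem \ref{thm: strong giant is local} gives $|\comp_{\max}|/n\convp\zeta>0$, so $|\comp_{\max}|\convp\infty$ automatically and the conclusion follows. If $\zeta=0$, I argue directly that $K_n/n\convp 1$: the bound $K_n\leq n$ is trivial, and for the matching lower bound $K_n\geq n-M_n$ it suffices to prove $M_n/n\convp 0$, where $M_n$ counts the vertices lying in non-singleton SCCs. I would split $M_n=M_n^{\leq r}+M_n^{>r}$ according to whether the shortest directed cycle through the vertex has length at most $r$ or greater than $r$. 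For fixed $r$, $M_n^{\leq r}/n\convp 0$ by forward-backward local convergence to a directed tree, since any such vertex has a non-tree $r$-neighbourhood. For the tail, $M_n^{>r}$ is bounded by the number of vertices $v$ with both $|\comp^-(v)|,|\comp^+(v)|\geq r$; when $\zeta=0$ the giant is $o_P(n)$, so most pairs of such vertices lie in distinct SCCs, and the `almost-local' condition \eqref{eq: condition for giant} forces this count divided by $n$ to vanish as $r\to\infty$.

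The main obstacle is the $\zeta=0$ case: being in a non-trivial SCC is a global (non-local) property, so the tree-like structure of the local limit does not by itself force $M_n=o_P(n)$. The argument above has to combine local convergence at small scales (killing short directed cycles) with the `almost-local' input \eqref{eq: condition for giant} at large scales (killing long cycles in the absence of a giant), which is precisely the strength delivered by the hypothesis of Theorem \ref{thm: strong giant is local}.
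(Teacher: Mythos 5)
Your proof is correct, but it proceeds along a different route than the paper's. The paper does not derive Theorem~\ref{thm: Number of scc in locally tree-like graph} from Theorem~\ref{thm: limiting number of scc} at all; instead it proves a single clean lemma (Lemma~\ref{lem: number of strongly isolated vertices in locally tree-like graphs}) showing that, in the locally tree-like setting, the proportion $\alpha_1(n)$ of strongly isolated vertices satisfies $\alpha_1(n)\convp 1-\zeta$, and then squeezes $K_n/n$ between $\alpha_1(n)$ and $1-|\comp_{\max}|/n+1/n$. That squeeze works uniformly in $\zeta$, so no case split on $\zeta=0$ versus $\zeta>0$ is needed, and no reference to Theorem~\ref{thm: limiting number of scc} or its auxiliary hypothesis $|\comp_{\max}|\convp\infty$ arises. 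Your route for $\zeta>0$ — deducing the result as a corollary of Theorem~\ref{thm: limiting number of scc} after collapsing the integrand via $|\comp(o)|=1$ a.s.\ — is perfectly valid and makes the logical relation between the two theorems transparent, which is a nice observation. Your route for $\zeta=0$ recovers, in a slightly different guise, the content of Lemma~\ref{lem: number of strongly isolated vertices in locally tree-like graphs}: your split of $M_n$ by shortest-cycle length plays the role of the lower bound on $\alpha_1(n)$ via the finite-neighbourhood event $\{|B_k^{\sss-}(v)\cap B_k^{\sss+}(v)|=1,\ |\partial B_k^{\sss-}(v)||\partial B_k^{\sss+}(v)|=0\}$.

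One small correction to the $\zeta=0$ argument: the tail term $M_n^{>r}$ is already controlled by plain forward-backward local convergence; you do not need the almost-local condition \eqref{eq: condition for giant} there. A vertex $v$ in a non-singleton SCC whose shortest directed cycle has length $>r$ satisfies $|\comp^{\sss-}_v|,|\comp^{\sss+}_v|>r$, so $M_n^{>r}\leq Z_{\geq r}$, and the local limit alone gives $Z_{\geq r}/n\convp\zeta_{\geq r}\to\zeta=0$ as $r\to\infty$. Your appeal to ``most pairs in distinct SCCs'' and condition \eqref{eq: condition for giant} is therefore superfluous for this step (though harmless). The upshot is that, for $\zeta=0$, the conclusion $K_n/n\convp 1$ actually holds under local convergence to a directed tree alone, without the almost-local hypothesis — which is consistent with the paper's remark that locally tree-like digraphs are a special case where fewer conditions are needed.
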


\begin{remark}[Locally tree-like digraphs are special]
    \rm The limiting value for the number of strongly connected components in Theorem \ref{thm: limiting number of scc}, if computed for locally tree-like digraphs, matches with the expression in Theorem \ref{thm: Number of scc in locally tree-like graph}. Thus, Theorem \ref{thm: Number of scc in locally tree-like graph} tells us that we do not need additional conditions for locally tree-like digraphs, as in Theorem \ref{thm: limiting number of scc}. 
    \hfill\ensymboldefinition
\end{remark}

\begin{remark}[Upper bound in terms of $\zeta$]
\rm The upper bound in Theorem \ref{thm: Number of scc in locally tree-like graph} holds more generally. Indeed, the bound
    \eqn{
    \frac{K_n}{n} \leq 1-\frac{|\comp_{\max}|}{n}+\frac{1}{n}
    }
always holds. Thus, under the assumptions of Theorem \ref{thm: Number of scc in locally tree-like graph}, whp for every $\vep>0$,
    \eqn{
    \frac{K_n}{n} \leq 1-\zeta+\vep.
    }
\hfill\ensymboldefinition
\end{remark}

\begin{remark}[Comparison of proofs between directed and undirected graphs]
\rm  The complexity of proofs in digraphs in comparison to undirected graphs varies depending on the connectivity property we are investigating. In particular, the proofs for the size of the strong giant are similar to the proofs used for the undirected case in \cite{Hofs21b}. In contrast, for digraphs, the number of connected components is a non-local quantity, whereas, for undirected graphs, it is local. Thus, the proof becomes more complicated and less general. 
\hfill\ensymboldefinition
\end{remark}

\section{Local convergence in digraphs}\label{sec-LC-RG}
In this section, we formally define the notion of forward-backward local weak convergence and local convergence in probability for digraphs. There are several versions of local convergence in random digraphs. Amongst these, the four most commonly discussed are forward, backward, forward-backward, and marked (see Remark \ref{rem: local convergence digraphs}). 

We start by defining rooted digraphs:
\begin{definition}[Rooted digraph]\label{def: rooted digraphs}
    \rm A digraph is $G = (V(G), E(G))$ together with a vertex $o\in V(G)$ is called a rooted digraph. We denote a rooted digraph $G$ with root $o\in V(G)$ by $(G, o)$.
    \hfill\ensymboldefinition
\end{definition}

Next we define the forward-backward metric space over the rooted digraphs: 
\begin{definition}[Metric space]\label{def: Metric Space on Marked rooted graphs}
 \rm Let $\mathscr{G}_\star$ be the space of all rooted digraphs. Let $(G_1, o_1),$ $(G_2, o_2) \in \mathscr{G}_\star$. Define the metric $d_{\sss \mathscr{G}_\star}\colon \mathscr{G}_\star \times \mathscr{G}_\star \to \mathbb{R}_{\geq 0}$ by
\begin{equation}
     d_{\sss \mathscr{G}_\star}((G_1, o_1), (G_2, o_2)) := \frac{1}{R^\star+1}.
\end{equation}
 where $R^\star$ is given by
  \begin{align*}
        R^\star := &\sup\{r\geq 0\text{ s.t. }B^{\sss(G_1)}_r(o_1) \cong B^{\sss(G_2)}_r(o_2)\},
  \end{align*}
where $\cong$ denotes rooted digraph isomorphism defined in Definition \ref{def-rooted-dir-isomorphisms}.
\hfill\ensymboldefinition
\end{definition}
\begin{remark}[Equivalence classes]
\rm  Obviously, $R^\star=\infty$ when $(G_1, o_1)\cong (G_2,o_2)$ so that $d_{\sss \mathscr{G}_\star}((G_1, o_1), (G_2, o_2))=0.$ Therefore, $d_{\sss \mathscr{G}_\star}$ defines a metric on the equivalence classes of isomorphic digraphs.
\hfill\ensymboldefinition
\end{remark}
\begin{definition}[Forward-backward local convergence in probability of digraphs]
   \rm  The sequence $(G_n)_{n\geq 1}$ of random digraphs is said to converge locally in probability to the random rooted digraph $(G, o)$, a random variable taking values in $\mathscr{G}_\star$ having law $\mu$, as $n\to \infty$, if for every continuous and bounded function $f\colon \mathscr{G}_\star \to \mathbb{R}$ in the forward-backward metric,
     \begin{equation}
         \mathbb{E}_n[f(G_n, o_n)] \overset{\sss\prob}{\longrightarrow} \mathbb{E}_{\mu}[f(G, o)],
     \end{equation}
     where $o_n$ is a uniformly chosen vertex from $V(G_n)$.
     \hfill\ensymboldefinition
\end{definition}

\section{Giants in converging digraphs: Proofs of Theorems \ref{thm: Upper bound strong giant} \& \ref{thm: strong giant is local}}\label{sec-giants-of-locally-converging-digraphs}
 In this section we derive a condition for the existence of a giant in a sequence of random digraphs that converges locally in probability in the forward-backward sense. We start by proving the generic probabilistic upper bound on the size of the strong giant in Theorem \ref{thm: Upper bound strong giant}.
\begin{proof}[Proof of Theorem \ref{thm: Upper bound strong giant}]
Let $Z_{\geq k}$ be the number of vertices in $G_n$ having in- and out-components size at least $k$, i.e., 
\begin{equation}\label{eq: counting vertices with large in- and out component}
    Z_{\geq k} = \sum_{v \in V(G_n)} \mathbbm{1}_{\{ |\mathscr{C}^{\sss-}_v|\geq k, |\mathscr{C}^{\sss+}_v|\geq k\}}.
\end{equation}
Let $\zeta_{\geq k} = \mu(|\mathscr{C}^{\sss-}_o|\geq k, |\mathscr{C}^{\sss+}_o| \geq k)$. Then, taking $k\to\infty$, we have $\zeta_{\geq k} \to\mu(|\mathscr{C}^{\sss-}_o| = |\mathscr{C}^{\sss+}_o| = \infty)$. 

Notice that $(G, o) \mapsto \mathbbm{1}_{\{ |\mathscr{C}^{\sss-}_v|\geq k, |\mathscr{C}^{\sss+}_v|\geq k\}}$ is a bounded continuous function in the forward-backward sense of random digraphs, and thus due to the forward-backward local convergence in probability, $Z_{\geq k}/n \convp \zeta_{\geq k}$. Therefore, for all $\varepsilon>0$,
\begin{equation}
   \prob\left(\Big|\frac{Z_{\geq k}}{n} - \zeta_{\geq k}\Big|>\varepsilon\right)\to 0.
\end{equation}
Also, $\zeta_{\geq k} \to \zeta$ as $k \to \infty$, which means that there exists $K\in \mathbb{N}$ such that $\zeta_{\geq k} \leq \zeta +\varepsilon/2$ for all $k \geq K$. Further, $\left\{|\mathscr{C}_{\max}| \geq x\right\} \subseteq \left\{Z_{\geq k} \geq x\right\}$, for every $x\geq 1$. Thus,
\begin{equation}
    \prob\left(\frac{|\mathscr{C}_{\max}|}{n} \geq \zeta+\varepsilon\right) \leq \prob\left(\frac{|\mathscr{C}_{\max}|}{n} \geq \zeta_{\geq k}+\frac{\varepsilon}{2}\right) \leq \prob\left(\frac{Z_{\geq k}}{n} \geq \zeta_{\geq k}+\frac{\varepsilon}{2}\right)\to 0.
\end{equation}
This completes the proof.
\end{proof}
\begin{remark}[In- and out-components of strongly connected components]\label{rem: out and in same size in scc}
    \rm Each vertex in a strongly connected component has the same in- and out-component. This is illustrated in Figure \ref{Fig: bowtie} for the largest strongly connected component. We denote the in- and out-component associated with $\comp_{\sss(i)}$ by $\mathscr{C}^{\sss-}_{\sss(i)}$ and $\mathscr{C}^{\sss+}_{\sss(i)}$, respectively. 
    \hfill\ensymboldefinition
\end{remark}
In the remainder of the proof, the following notation will be essential:
\begin{notation}\label{notation: probability convergence}\rm Suppose $(X_n)_{n\geq 1}$ and $(X_{n, k})_{n\geq 1, k\geq 1}$ are sequences of random variables. We write $X_{n, k}=o_{k, \sss\prob}(X_n)$ when 
        $$\lim_{k\to \infty}\limsup_{n\to \infty}\prob\left(|X_{n, k}| >\varepsilon |X_n| \right) = 0.$$
        \hfill\ensymboldefinition
\end{notation}

\begin{lemma}[Sum of squares of cluster sizes]\label{lem: sum of squares cluster sizes}
    Under the assumptions of Theorem \ref{thm: strong giant is local},
 \begin{equation}\label{eq: second part of lemma}
        \frac{1}{n^2}\sum_{i\geq 1}|\mathscr{C}_{(i)}|^2\mathbbm{1}_{\{ |\mathscr{C}^{\sss-}_{(i)}|\geq k, |\mathscr{C}^{\sss+}_{(i)}|\geq k\}} = \zeta^2 + o_{k, \sss\prob}(1).
    \end{equation}
\end{lemma}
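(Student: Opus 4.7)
The plan is to rewrite the sum $\sum_i |\mathscr{C}_{(i)}|^2 \mathbbm{1}_{\{|\mathscr{C}^{\sss-}_{(i)}|\geq k,\, |\mathscr{C}^{\sss+}_{(i)}|\geq k\}}$ as a double sum over ordered vertex pairs and then recognize the discrepancy between this double sum and the square of the counting variable $Z_{\geq k}$ from the proof of Theorem \ref{thm: Upper bound strong giant} as precisely the quantity $N^k_n$ appearing in the `almost-local' condition \eqref{eq: condition for giant}. Since strongly connected components partition $V(G_n)$ and, by Remark \ref{rem: out and in same size in scc}, all vertices in the same SCC share the same in- and out-component, we have
\begin{equation}
\sum_{i\geq 1} |\mathscr{C}_{(i)}|^2 \mathbbm{1}_{\{|\mathscr{C}^{\sss-}_{(i)}|\geq k,\, |\mathscr{C}^{\sss+}_{(i)}|\geq k\}}
= \sum_{x,y\in V(G_n)} \mathbbm{1}_{\{x\leftrightarrow y,\,|\mathscr{C}^{\sss-}_x|\geq k,\,|\mathscr{C}^{\sss+}_x|\geq k,\,|\mathscr{C}^{\sss-}_y|\geq k,\,|\mathscr{C}^{\sss+}_y|\geq k\}}.
\end{equation}
Comparing with $Z_{\geq k}^2 = \sum_{x,y}\mathbbm{1}_{\{|\mathscr{C}^{\sss-}_x|\geq k,\,|\mathscr{C}^{\sss+}_x|\geq k\}}\mathbbm{1}_{\{|\mathscr{C}^{\sss-}_y|\geq k,\,|\mathscr{C}^{\sss+}_y|\geq k\}}$ and the definition \eqref{eq: number of vertices with large out and in components} of $N^k_n$, one obtains the identity
\begin{equation}
\sum_{i\geq 1} |\mathscr{C}_{(i)}|^2 \mathbbm{1}_{\{|\mathscr{C}^{\sss-}_{(i)}|\geq k,\, |\mathscr{C}^{\sss+}_{(i)}|\geq k\}} = Z_{\geq k}^2 - N^k_n.
\end{equation}

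Next I would handle the two terms on the right-hand side separately. For $Z_{\geq k}^2/n^2$, the proof of Theorem \ref{thm: Upper bound strong giant} already shows that $Z_{\geq k}/n \convp \zeta_{\geq k}$ for each fixed $k$, by forward-backward local convergence in probability applied to the bounded continuous indicator $\mathbbm{1}_{\{|\mathscr{C}^{\sss-}_o|\geq k,\,|\mathscr{C}^{\sss+}_o|\geq k\}}$. The continuous mapping theorem then gives $Z_{\geq k}^2/n^2 \convp \zeta_{\geq k}^2$ as $n\to\infty$, and $\zeta_{\geq k}^2 \to \zeta^2$ as $k\to\infty$ by monotone convergence. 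For $N^k_n/n^2$, the hypothesis \eqref{eq: condition for giant} combined with Markov's inequality yields $\lim_{k\to\infty}\limsup_{n\to\infty}\prob(N^k_n/n^2 >\varepsilon)=0$ for every $\varepsilon>0$.

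Putting the pieces together via the decomposition
\begin{equation}
\frac{1}{n^2}\sum_{i\geq 1} |\mathscr{C}_{(i)}|^2 \mathbbm{1}_{\{|\mathscr{C}^{\sss-}_{(i)}|\geq k,\,|\mathscr{C}^{\sss+}_{(i)}|\geq k\}} - \zeta^2
= \Bigl(\frac{Z_{\geq k}^2}{n^2}-\zeta_{\geq k}^2\Bigr) - \frac{N^k_n}{n^2} + (\zeta_{\geq k}^2-\zeta^2),
\end{equation}
taking $\limsup_{n\to\infty}$ first (which kills the first and second summands for each fixed $k$, the first by local convergence, the second by Markov plus the hypothesis), and then $k\to\infty$ (which kills the last deterministic term), gives exactly the statement \eqref{eq: second part of lemma} in the sense of Notation \ref{notation: probability convergence}. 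The only genuinely new input is the algebraic identity $\sum_i|\mathscr{C}_{(i)}|^2\mathbbm{1}_{\{\cdots\}}=Z_{\geq k}^2-N^k_n$, which is really the whole content of the lemma; everything else is a direct application of already-established facts, so I do not expect any real obstacle beyond being careful with the indicator manipulation on the diagonal $x=y$ (which lies in both $Z_{\geq k}^2$ and the SCC sum and therefore cancels cleanly).
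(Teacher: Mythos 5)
Your proof is correct and follows essentially the same route as the paper: the paper squares the partition identity $Z_{\geq k}=\sum_i|\mathscr{C}_{(i)}|\mathbbm{1}_{\{\cdots\}}$ to get $Z_{\geq k}^2=\sum_i|\mathscr{C}_{(i)}|^2\mathbbm{1}_{\{\cdots\}}+N^k_n$, which is the same algebraic identity you obtain by reindexing over ordered vertex pairs, and then applies Markov plus the hypothesis to dispose of $N^k_n/n^2$, together with $Z_{\geq k}/n\convp\zeta_{\geq k}$ for the main term. You are slightly more explicit than the paper about the continuous-mapping step $Z_{\geq k}^2/n^2\convp\zeta_{\geq k}^2$ and the deterministic limit $\zeta_{\geq k}^2\to\zeta^2$, but there is no substantive difference.
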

\begin{proof}
By Remark \ref{rem: out and in same size in scc}, we can rewrite $Z_{\geq k}$ in \eqref{eq: counting vertices with large in- and out component} as 
\begin{equation}\label{eq: new equation for large scc}
   Z_{\geq k} = \sum_{v \in V(G_n)} \mathbbm{1}_{\{ |\mathscr{C}^{\sss-}_v|\geq k, |\mathscr{C}^{\sss+}_v|\geq k\}} = \sum_{i \geq 1}|\mathscr{C}_{(i)}|\mathbbm{1}_{\{ |\mathscr{C}^{\sss-}_{(i)}|\geq k, |\mathscr{C}^{\sss+}_{(i)}|\geq k\}}. 
\end{equation}
Squaring \eqref{eq: new equation for large scc} gives
\begin{equation}\label{eq: squaring}
   Z_{\geq k}^2 = \sum_{i \geq 1}|\mathscr{C}_{(i)}|^2\mathbbm{1}_{\{ |\mathscr{C}^{\sss-}_{(i)}|\geq k, |\mathscr{C}^{\sss+}_{(i)}|\geq k\}} + \sum_{i\neq j}|\mathscr{C}_{(i)}||\mathscr{C}_{(j)}|\mathbbm{1}_{\{ |\mathscr{C}^{\sss-}_{(i)}|, |\mathscr{C}^{\sss+}_{(i)}|,|\mathscr{C}^{\sss-}_{(j)}|, |\mathscr{C}^{\sss+}_{(j)}|\geq k\}}. 
\end{equation}
The second cross-product term counts the number of pairs of vertices that are not in the same strongly connected component, but have at least $k$ vertices in their in- and out-components. Thus, by \eqref{eq: number of vertices with large out and in components},
\begin{equation}\label{eq: squaring part 2}
   \frac{Z_{\geq k}^2}{n^2} = \frac{1}{n^2}\sum_{i \geq 1}|\mathscr{C}_{(i)}|^2\mathbbm{1}_{\{ |\mathscr{C}^{\sss-}_{(i)}|\geq k, |\mathscr{C}^{\sss+}_{(i)}|\geq k\}} + \frac{1}{n^2}N^k_n. 
\end{equation}
Fix $\varepsilon > 0$. By the Markov inequality,
 \begin{equation}\label{eq: okp}
\lim_{k\to \infty}\limsup_{n\to \infty}\prob\left(\frac{1}{n^2}N^k_n > \varepsilon\right) \leq \lim_{k\to \infty}\limsup_{n\to \infty}\frac{1}{\varepsilon n^2}\expec\left[N^k_n\right]=  0, 
\end{equation}
Thus, $ N^k_n = o_{k, \sss\prob}(n^2)$, proving \eqref{eq: second part of lemma}. This completes the proof.
\end{proof}

Next we prove Theorem \ref{thm: strong giant is local}:
\begin{proof}[Proof of Theorem \ref{thm: strong giant is local}] Due to  Theorem \ref{thm: Upper bound strong giant}, it is sufficient to show \eqref{eq: giant size} for $\zeta > 0$. Thus, we assume that $\zeta>0$.  Observe that
 \begin{equation}\label{eq: using sum of square to bound giant from below}
        \frac{1}{n^2}\sum_{i\geq 1}|\mathscr{C}_{(i)}|^2\mathbbm{1}_{\{ |\mathscr{C}^{\sss-}_{(i)}|\geq k, |\mathscr{C}^{\sss+}_{(i)}|\geq k\}} \leq \frac{|\mathscr{C}_{\max}|}{n}\frac{1}{n}\sum_{i\geq 1}|\mathscr{C}_{(i)}|\mathbbm{1}_{\{ |\mathscr{C}^{\sss-}_{(i)}|\geq k, |\mathscr{C}^{\sss+}_{(i)}|\geq k\}},
    \end{equation}
which means that 
\begin{equation}\label{eq: using sum of square to bound giant from below part 1}
         \frac{|\mathscr{C}_{\max}|}{n} \geq \frac{\frac{1}{n^2}\sum_{i\geq 1}|\mathscr{C}_{(i)}|^2\mathbbm{1}_{\{ |\mathscr{C}^{\sss-}_{(i)}|\geq k, |\mathscr{C}^{\sss+}_{(i)}|\geq k\}}}{\frac{1}{n}\sum_{i\geq 1}|\mathscr{C}_{(i)}|\mathbbm{1}_{\{ |\mathscr{C}^{\sss-}_{(i)}|\geq k, |\mathscr{C}^{\sss+}_{(i)}|\geq k\}}} = \frac{\zeta^2 + o_{k, \sss\prob}(1)}{\zeta_{\geq k} + o_{\sss\prob}(1)}.
\end{equation}
Letting $k\to \infty$ in \eqref{eq: using sum of square to bound giant from below}, for all $\varepsilon>0$, we get
\begin{equation}\label{eq: using sum of square to bound giant from below part 2}
        \lim_{n\to \infty}\prob\left( \frac{|\mathscr{C}_{\max}|}{n} \geq \zeta - \varepsilon\right)=1.
\end{equation}
Equation \eqref{eq: using sum of square to bound giant from below part 2}, combined with Theorem \ref{thm: Upper bound strong giant}, completes the proof of the law of large numbers on the strong giant under the condition \eqref{eq: condition for giant} in Theorem \ref{thm: strong giant is local}.

Next we show that \eqref{eq: condition for giant} is also a necessary condition for the giant component to converge to $\zeta$. Suppose that the condition in \eqref{eq: condition for giant} fails. This implies that
\begin{equation}\label{eq: giant condition failure}
    \limsup_{k\to \infty}\limsup_{n\to \infty}\frac{1}{n^2}\expec[N_n^k]= \kappa >0.
\end{equation}
In turn, this implies that there exists a subsequence $(n_l)_{l\geq 1}$ such that
\begin{equation}
  \lim_{l\to \infty}\limsup_{k\to \infty} \frac{1}{n_l^2}\expec[N_{n_l}^k] = \kappa.  
\end{equation}
Thus, along the subsequence  $(n_l)_{l\geq 1}$,
\begin{align*}
    \frac{1}{n_{l}^2}\expec \left[|\comp_{\max}|^2\1_{\{|\comp_{\max}^+| \geq k, |\comp_{\max}^-| \geq k\}}\right] &\leq \frac{1}{n_l^2}\expec\left[\sum_{i\geq 1}|\comp_{\sss(i)}|^2\1_{\{|\comp_{\sss(i)}^+| \geq k, |\comp_{\sss(i)}^-| \geq k\}}\right]\\
    & = \frac{1}{n_l^2}\expec[Z_{\geq k}^2 - N_{n_l}^k] \to \zeta_{\geq k}^2 - \kappa.
\end{align*}
Notice that 
\eqan{
    \frac{1}{n_{l}^2}\expec \left[|\comp_{\max}|^2\right] &= \frac{1}{n_{l}^2}\expec \left[|\comp_{\max}|^2\1_{\{|\comp_{\max}^+| \geq k, |\comp_{\max}^-| \geq k\}}\right]\\
    &\qquad+ \frac{1}{n_{l}^2}\expec \left[|\comp_{\max}|^2\1_{\{|\comp_{\max}^+| < k\} \cup \{|\comp_{\max}^-| < k\}}\right].\nonumber
    }
We know that $|\comp_{\max}^\pm|\geq |\comp_{\max}|$. Thus, 
$$\{|\comp_{\max}^+| < k\}\cup \{|\comp_{\max}^-| < k\} \subseteq \{|\comp_{\max}| < k\},$$ 
so that, in turn,
\begin{equation}
    \frac{1}{n_{l}^2}\expec \left[|\comp_{\max}|^2\1_{\{|\comp_{\max}^+| < k\} \cup \{|\comp_{\max}^-| < k\}}\right] \leq \left(\frac{k}{n_l}\right)^2 \to 0,
\end{equation}
as $n_l \to \infty$. This implies 
\begin{equation}\label{eq: gives a contradiction}
    \lim_{l\to \infty}\frac{1}{n_l^2}\expec[|\comp_{\max}|^2] \leq \zeta^2-\kappa < \zeta^2.
\end{equation}
We conclude that $|\comp_{\max}|/n \overset{\sss\prob}{\to} \zeta$ cannot hold. This is because by the bounded convergence theorem, it would imply that $\expec\left[\left(|\comp_{\max}|/n\right)^2\right]\to \zeta^2$ which gives a contradiction with \eqref{eq: gives a contradiction}.
\end{proof}
Recall the definition of $N_n^k$ in \eqref{eq: number of vertices with large out and in components}, and that of $N_n^k(2)$ in \eqref{eq: number of vertices with large out and in components version 2}.
We now show that both are of the same order of magnitude:

\begin{lemma}[Relaxing the assumption]\label{lem: relation between two counts}
    $N^k_n(2) \leq N^k_n \leq 2N^k_n(2).$
\end{lemma}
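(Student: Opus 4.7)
The plan is to establish the two inequalities separately via straightforward set-inclusion arguments, exploiting (i) the logical implication between $x \nrightarrow y$ and $x \nleftrightarrow y$, and (ii) the symmetry of the size conditions in the pair $(x,y)$.

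For the lower bound $N_n^k(2) \leq N_n^k$, I would simply observe that if there is no directed path from $x$ to $y$, then $x$ and $y$ cannot lie in the same strongly connected component, since membership in the same SCC requires directed paths in both directions. Hence $\{x \nrightarrow y\} \subseteq \{x \nleftrightarrow y\}$, and since the in- and out-component size conditions are identical in the two counts, every ordered pair contributing to $N_n^k(2)$ also contributes to $N_n^k$.

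For the upper bound $N_n^k \leq 2 N_n^k(2)$, the key observation is the decomposition
\begin{equation*}
\{x \nleftrightarrow y\} = \{x \nrightarrow y\} \cup \{y \nrightarrow x\},
\end{equation*}
which is precisely the definition of two vertices not lying in the same SCC. Since the size constraint $|\comp^-_x|, |\comp^+_x|, |\comp^-_y|, |\comp^+_y| \geq k$ is symmetric in the roles of $x$ and $y$, the number of ordered pairs $(x,y)$ satisfying the size constraint together with $y \nrightarrow x$ equals $N_n^k(2)$ (by relabelling $x \leftrightarrow y$). A union bound on ordered pairs then gives $N_n^k \leq N_n^k(2) + N_n^k(2) = 2 N_n^k(2)$.

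There is no real obstacle here: the argument is purely combinatorial and uses no probabilistic input. The only point requiring a little care is to be explicit that $N_n^k$ and $N_n^k(2)$ count \emph{ordered} pairs of vertices, so that the symmetry step is valid without any factor-of-two complications. The whole proof should take only a few lines.
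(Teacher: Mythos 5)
Your proof is correct and follows essentially the same route as the paper: the paper defines events $A_{x,y}$ (size conditions together with $x\nrightarrow y$) and $B_{x,y}$ (size conditions together with $y\nrightarrow x$), notes $N_n^k=\sum\mathbbm{1}_{A_{x,y}\cup B_{x,y}}$ and $N_n^k(2)=\sum\mathbbm{1}_{A_{x,y}}=\sum\mathbbm{1}_{B_{x,y}}$, and applies $\mathbbm{1}_A\leq\mathbbm{1}_{A\cup B}\leq\mathbbm{1}_A+\mathbbm{1}_B$. Your inclusion $\{x\nrightarrow y\}\subseteq\{x\nleftrightarrow y\}$, the decomposition $\{x\nleftrightarrow y\}=\{x\nrightarrow y\}\cup\{y\nrightarrow x\}$ together with the relabelling-over-ordered-pairs step, and the union bound amount to exactly these same observations, just spelled out in words rather than in indicator notation.
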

\begin{proof} 
For $x, y \in V(G_n)$ we define the events $A_{x, y}$ and $B_{x, y}$ as
\begin{align}
    &A_{x, y}= \left[ |\mathscr{C}^{\sss-}_{x}|,|\mathscr{C}^{\sss+}_{x}|,|\mathscr{C}^{\sss-}_{y}|, |\mathscr{C}^{\sss+}_{y}|\geq k, x\nrightarrow y \right],\\
    &B_{x, y}=\left[ |\mathscr{C}^{\sss-}_{x}|,|\mathscr{C}^{\sss+}_{x}|,|\mathscr{C}^{\sss-}_{y}|, |\mathscr{C}^{\sss+}_{y}|\geq k, x\nleftarrow y \right].
\end{align}
Then,
\begin{align}
    N^k_n &=  \sum_{(x, y)\in V(G_n)^2}
    \!\!\!\!\mathbbm{1}_{A_{x, y} \cup B_{x, y}} , \qquad N^k_n(2) = \sum_{(x, y)\in V(G_n)^2}\!\!\!\!\mathbbm{1}_{A_{x, y}} = \sum_{(x, y)\in V(G_n)^2}\!\!\!\!\mathbbm{1}_{B_{x, y}}.
\end{align}
Using the fact $\mathbbm{1}_{A}\leq\mathbbm{1}_{A\cup B}\leq \mathbbm{1}_{A} +\mathbbm{1}_{B}$, we get
\begin{align}
 N^k_n(2) = \sum_{(x, y)\in V(G_n)^2}\!\!\!\!\mathbbm{1}_{A_{x, y}} \leq   N^k_n \leq \sum_{(x, y)\in V(G_n)^2}\!\!\!\!\left(\mathbbm{1}_{A_{x, y}} + \mathbbm{1}_{B_{x, y}}\right) = 2N^k_n(2),
\end{align}
which completes the proof.
\end{proof}

\section{Local limit of the giant: Proof of Theorems \ref{thm: given degree vertices in giant} \& \ref{thm: local limit of giant scc}}\label{sec-giant-local-convergence}
In this section we prove local limit properties of the giant component: 
\begin{proof}[Proof of Theorem \ref{thm: given degree vertices in giant}]
When $\zeta=0$, there is nothing to prove, so we may again assume that $\zeta>0$. For $A\subseteq \mathbb{N}^2$, we define
\begin{equation}
    Z_{A, \geq k} = \sum_{v \in V(G_n)}\mathbbm{1}_{\{|\mathscr{C}^{\sss-}_v|, |\mathscr{C}^{\sss+}_v| \geq k, (d^{\sss-}_v, d^{\sss+}_v) \in A\}}.
\end{equation}
Assuming that $G_n$ converges locally in probability in the forward-backward sense, we get
\begin{equation}
    \frac{Z_{A, \geq k}}{n} \overset{\sss\prob}{\to} \mu\left(|\mathscr{C}^{\sss-}_o|, |\mathscr{C}^{\sss+}_o| \geq k, (d^{\sss-}_o, d^{\sss+}_o) \in A\right).
\end{equation}
Since $|\mathscr{C}_{\max}|> k$ with high probability (it is here that we use that $\zeta>0$), we have 
\begin{equation}\label{eq: mu_k(A)}
    \frac{1}{n}\sum_{(l,m)\in A}v_1(l,m) \leq \frac{Z_{A, \geq k}}{n} \overset{\sss\prob}{\to} \mu\left(|\mathscr{C}^{\sss-}_o|, |\mathscr{C}^{\sss+}_o| \geq k, (d^{\sss-}_o, d^{\sss+}_o) \in A\right).
\end{equation}
Define
\begin{equation}
     \mu_k(A) = \mu\left(|\mathscr{C}^{\sss-}_o|, |\mathscr{C}^{\sss+}_o| \geq k, (d^{\sss-}_o, d^{\sss+}_o) \in A\right).
\end{equation}
Applying \eqref{eq: mu_k(A)} for $A = \{(l, m)\}^c$, we get
\begin{equation}
    \lim_{n\to\infty}\prob\left(\frac{1}{n}\left|\mathscr{C}_{\max} - v_1(l,m) \right| \leq \mu_k(A) + \frac{\varepsilon}{2} \right)=1.
\end{equation}
We argue by contradiction. Suppose that for some pair $(l, m) \in \mathbb{N}^2$, 
\begin{equation}
    \liminf_{n\to\infty}\prob\left(\frac{1}{n}v_1(l,m)  \leq \mu_k(\{l, m\}) - \varepsilon \right)= \kappa >0.
\end{equation}
Then along the subsequence $(n_j)_{j\geq 1}$, where the above liminf is attained, with a non-zero probability the following holds:
\begin{equation}\label{eq: vertices in giant}
\frac{1}{n}|\mathscr{C}_{\max}| =  \frac{1}{n}\left( |\mathscr{C}_{\max} |- v_1(l,m)\right) + \frac{1}{n}v_1(l,m) \leq \mu\left(|\mathscr{C}^{\sss-}_o|= |\mathscr{C}^{\sss+}_o| =\infty\right) - \frac{\varepsilon}{2},
\end{equation}
which contradicts Theorem \ref{thm: strong giant is local}. Thus, \eqref{eq: First} holds.

For the next part, we just observe that
\begin{equation}\label{eq: edges}
    \frac{|E(\comp_{\max})|}{n} = \frac{1}{2n}\sum_{k, l}(k+l)v_1(k, l).
\end{equation}
Thus, for any natural number $N$.
\begin{equation}\label{eq: edges in giant}
    \frac{|E(\comp_{\max})|}{n} = \frac{1}{2n}\sum_{k+l\leq N}(k+l)v_1(k, l) + \frac{1}{2n}\sum_{k+l > N}(k+l)v_1(k, l).
\end{equation}
From \eqref{eq: First} and the dominated convergence theorem, we conclude that
\eqan{\label{eq: label}
 \frac{1}{2n}\sum_{k+l\leq N}(k+l)v_1(k, l) &\overset{\sss\prob}{\longrightarrow} \frac{1}{2}\sum_{k+l\leq N}(k+l)\mu\left(|\mathscr{C}^{\sss-}_o| = |\mathscr{C}^{\sss+}_o| = \infty, D_o=(k, l)  \right)\nonumber\\
 &=\frac{1}{2}\expec_{\mu}\left[ \left( D^{\sss-}_{o}+D^{\sss+}_{o}\right)\1_{\{D^{\sss-}_{o}+D^{\sss+}_{o} < N\}}\right].
}
For the second term in \eqref{eq: edges in giant}, we bound $v_1(k, l)$ by the total number of vertices with in- degree $k$ and out-degree $l$, which we denote by $n_{k, l}$, to get
\begin{align}\label{eq: label 2}
\begin{split}
   \frac{1}{2n}\sum_{k+l > N}(k+l)v_1(k, l)& \leq \frac{1}{2}\sum_{k+l > N}(k+l)\frac{n_{k, l}}{n}\\
   &= \frac{1}{2}\expec\left[\left(D^{\sss-}_{o_n}+D^{\sss+}_{o_n}\right)\1_{\{D^{\sss-}_{o_n}+D^{\sss+}_{o_n} >N \}}\mid G_n\right].
\end{split}
\end{align}
By uniform integrability of $D^{\sss-}_{o_n}$ and $D^{\sss+}_{o_n}$,
\begin{equation}
    \lim_{N\to \infty}\limsup_{n\to \infty}\expec\left[\left(D^{\sss-}_{o_n}+D^{\sss+}_{o_n}\right)\1_{\{D^{\sss-}_{o_n}+D^{\sss+}_{o_n} >N \}}\right] = 0.
\end{equation}
Thus, using the Markov inequality we can conclude that for each $\varepsilon>0$, there exists a $N = N(\varepsilon)<\infty$ such that 
\begin{equation}
    \prob \left(\expec\left[\left(D^{\sss-}_{o_n}+D^{\sss+}_{o_n}\right)\1_{\{D^{\sss-}_{o_n}+D^{\sss+}_{o_n} >N \}}\mid G_n\right] >\varepsilon\right) \to 0.
\end{equation}
This completes the proof for the second part of the theorem.
\end{proof}

\begin{proof}[Proof of Theorem \ref{thm: local limit of giant scc}]
When $\zeta=0$, there is nothing to prove, so we may again assume that $\zeta>0$. The proof is a minor modification of that of Theorem \ref{thm: given degree vertices in giant}. Let $\mathscr{G}_\star$ be the space of all rooted digraphs. By forward-backward local convergence in probability of $(G_n)_{n\geq 1}$, for every $\mathscr{H}_\star\subseteq \mathscr{G}_\star,$ 
\begin{equation}
     \frac{1}{n} \sum_{v \in V(G_n)}\1_{\left\{B_r^{\sss(G_n)}(v) \in \mathscr{H}_\star\right\}} \overset{\sss\prob}{\longrightarrow} \mu\left( B_r^{\sss(G)}(o) \in \mathscr{H}_\star\right).
\end{equation}
Thus, \eqref{eq: local convergence of complement of giant} follows from \eqref{eq: local convergence of giant}. Therefore, we only need to show \eqref{eq: local convergence of giant}. Define
    \begin{align}
    \begin{split}
        Z_{\geq k, \mathscr{H}_\star} &=  \frac{1}{n} \sum_{v \in V(G_n)}\1_{\left\{|\mathscr{C}^{\sss-}_v|\geq k, |\mathscr{C}^{\sss+}_v|\geq k, B_r^{\sss(G_n)}(v) \in \mathscr{H}_\star\right\}}\\
        &\overset{\sss\prob}{\longrightarrow} \mu\left(|\mathscr{C}^{\sss-}_o|\geq k, |\mathscr{C}^{\sss+}_o|\geq k, B_r^{\sss(G)}(o) \in \mathscr{H}_\star\right).
    \end{split}
    \end{align}
Because of Theorem \ref{thm: strong giant is local}, we have that $|\comp_{\max}|/n \overset{\sss\prob}{\to} \zeta >0$. Thus, on the high probability event $\left\{|\comp_{\max}| \geq k\right\}$,
\begin{equation}\label{eq: ubp}
    \frac{1}{n} \sum_{v \in \comp_{\max}}\1_{\left\{B_r^{\sss(G_n)}(v) \in \mathscr{H}_\star\right\}} \leq   Z_{\geq k, \mathscr{H}_\star} \overset{\sss\prob}{\to}\mu\left(|\mathscr{C}^{\sss-}_o|\geq k, |\mathscr{C}^{\sss+}_o|\geq k, B_r^{\sss(G)}(o) \in \mathscr{H}_\star\right).
\end{equation}
Define 
\begin{equation}
    \mu_k(\mathscr{H}_\star) = \mu\left(|\mathscr{C}^{\sss-}_o|\geq k, |\mathscr{C}^{\sss+}_o|\geq k, B_r^{\sss(G)}(o) \in \mathscr{H}_\star\right).
\end{equation}
Then, by \eqref{eq: ubp},
\begin{equation}\label{eq: upper bound on local limit of giant}
    \lim_{n\to \infty}\prob\left(\frac{1}{n} \sum_{v \in \comp_{\max}}\1_{\left\{B_r^{\sss(G_n)}(v) \in \mathscr{H}_\star\right\}} \leq  \mu_k\left(\mathscr{H}_\star\right) +\frac{\varepsilon}{2}\right) =1.
\end{equation}
Choosing $\mathscr{H}_\star = \{H_\star\}$, gives the upper bound. For, $\mathscr{H}_\star = \{H_\star\}^c$ in \eqref{eq: upper bound on local limit of giant},
\begin{equation}
    \lim_{n\to \infty} \prob\left(\frac{1}{n}\left[|\comp_{\max}| - \sum_{v\in \comp_{\max}}\1_{\{B_r^{\sss(G_n)}(v) \cong H_\star\}}\right] \leq \mu_k(\mathscr{H}_\star) +\frac{\varepsilon}{2} \right) = 1.
\end{equation}
We again argue by contradiction. For this, we assume that
\begin{equation}\label{eq: contradiction}
    \lim_{n\to \infty} \prob\left(\frac{1}{n}\sum_{v\in \comp_{\max}}\1_{\{B_r^{\sss(G_n)}(v) \cong H_\star\}}\leq \mu_k(\{H_\star\}) -\frac{\varepsilon}{2} \right) = \kappa>0.
\end{equation}
Then, with asymptotic probability $\kappa$, 
\eqan{
\frac{|\comp_{\max}|}{n} &= \frac{1}{n}\left[|\comp_{\max}| - \sum_{v\in \comp_{\max}}\1_{\{B_r^{\sss(G_n)}(v) \cong H_\star\}}\right] + \frac{1}{n}\sum_{v\in \comp_{\max}}\1_{\{B_r^{\sss(G_n)}(v) \cong H_\star\}}\nonumber\\
&\leq \zeta_{\geq k}-\vep/2 \leq \zeta-\vep/2.}
This contradicts Theorem \ref{thm: strong giant is local}. Thus, \eqref{eq: contradiction} cannot hold. This completes the proof. 
\end{proof}

\section{Number of strong components: Proof of Theorems \ref{thm: limiting number of scc} \& \ref{thm: Number of scc in locally tree-like graph}}\label{sec-number-strong-components}
In this section, we prove results concerning the number of strongly connected components of random digraph sequences having a local limit.

\subsection{Number of strong components is `almost' local: Proof of Theorem \ref{thm: limiting number of scc}}
In this section, we prove Theorem \ref{thm: limiting number of scc}. We prove a lower bound that holds more generally, and an upper bound that holds under the assumptions stated in Theorem \ref{thm: limiting number of scc}.

We first use that
    \begin{align}
    \label{number-SCCs}
    K_n = \sum_{v\in V(G_n)}\frac{1}{|\comp_v|}.
    \end{align}
Indeed, if one sums the right-hand side for the vertices as partitioned by the strongly connected components, one obtains
\begin{align}
    \sum_{v\in V(G_n)}\frac{1}{|\comp_v|} = \sum_{1 \leq i \leq K_n}\sum_{v\in \comp_{\sss(i)}}\frac{1}{|\comp_{\sss(i)}|} = \sum_{1 \leq i \leq K_n}1 = K_n. 
\end{align}

Equation \eqref{number-SCCs} is the starting point of our analysis. However, we emphasize that, unlike in the undirected case, $h(G,o)=1/|\comp_v|$ is {\em not} a bounded and continuous function in the forward-backward sense.

For the lower bound, we fix $k$ and bound
    \eqn{
    K_n \geq  \sum_{v\in V(G_n)}\frac{1}{|\comp_v|}\indicwo{\{|\mathscr{C}^{\sss-}_v|<k\}\cup \{|\mathscr{C}^{\sss+}_v|<k\}}.
    }
For each $k<\infty$, the function 
    \[h(G,o)=\frac{1}{|\comp_o|}\indicwo{\{|\mathscr{C}^{\sss-}_o|<k\}\cup \{|\mathscr{C}^{\sss+}_o|<k\}}
    \]
is a bounded continuous function. Indeed, any $u\in \comp_o$ is on a directed cycle starting and ending at $o$. When $|\mathscr{C}^{\sss-}_o|<k$ or $ |\mathscr{C}^{\sss+}_o|<k$, there cannot be any cycle longer than $k-1$, so that we can determine $h(G,o)$ on the basis of $B_k^{\sss(G)}(o)$ (recall Definition \ref{def-forward-backward-nbd}).

We conclude that
    \eqn{
    \frac{1}{n}\sum_{v\in V(G_n)}\frac{1}{|\comp_v|}\indicwo{\{|\mathscr{C}^{\sss-}_v|<k\}\cup \{|\mathscr{C}^{\sss+}_v|<k\}}
    \convp \expec_\mu\Bigg[\frac{1}{|\comp_o|}
    \indicwo{\{|\mathscr{C}^{\sss-}_o|<k\}\cup \{|\mathscr{C}^{\sss+}_o|<k\}}\Bigg].
    }
Since, for $k\rightarrow \infty$,
    \eqn{
    \expec_\mu\Bigg[\frac{1}{|\comp_o|}
    \indicwo{\{|\mathscr{C}^{\sss-}_o|<k\}\cup \{|\mathscr{C}^{\sss+}_o|<k\}}\Bigg]\rightarrow \expec_\mu\Bigg[\frac{1}{|\comp_o|}
    \indicwo{\{|\mathscr{C}^{\sss-}_o|<\infty\}\cup \{|\mathscr{C}^{\sss+}_o|<\infty\}}\Bigg],
    }
we obtain that
    \eqn{
    \frac{K_n}{n}\geq \expec_\mu\Bigg[\frac{1}{|\comp_o|}
    \indicwo{\{|\mathscr{C}^{\sss-}_o|<\infty\}\cup \{|\mathscr{C}^{\sss+}_o|<\infty\}}\Bigg]+o_{\sss\prob}(1),
    }
as required.

For the upper bound, we again fix $k$ and now write
    \eqn{
    \label{Kn-rewrite}
    K_n =  \sum_{v\in V(G_n)}\frac{1}{|\comp_v|}\indicwo{\{|\mathscr{C}^{\sss-}_v|<k\}\cup \{|\mathscr{C}^{\sss+}_v|<k\}}
    +\sum_{v\in V(G_n)}\frac{1}{|\comp_v|}\indicwo{\{|\mathscr{C}^{\sss-}_v|\geq k\}\cap \{|\mathscr{C}^{\sss+}_v|\geq k\}}.
    }
It suffices to prove that the second term in \eqref{Kn-rewrite} is $o_{k,\sss\prob}(n),$ which is what we will do now. For this, we make the further split
    \eqan{
    &\sum_{v\in V(G_n)}\frac{1}{|\comp_v|}\indicwo{\{|\mathscr{C}^{\sss-}_v|\geq k\}\cap \{|\mathscr{C}^{\sss+}_v|\geq k\}}\\
    &\qquad=\sum_{v\in V(G_n)}\frac{1}{|\comp_v|}[\indicwo{\{|\mathscr{C}^{\sss-}_v|\geq k\}\cap \{|\mathscr{C}^{\sss+}_v|\geq k\}}
    -\indic{v\in \comp_{\max}}]+\sum_{v\in V(G_n)}\frac{1}{|\comp_v|}\indic{v\in \comp_{\max}}.\nonumber
    }
We note that
    \eqn{
    \label{easy-term-SCC}
    \frac{1}{n}\sum_{v\in V(G_n)}\frac{1}{|\comp_v|}\indic{v\in \comp_{\max}}=
    \frac{1}{n}\sum_{v\in \comp_{\max}}\frac{1}{|\comp_{\max}|}
    =\frac{1}{n}\rightarrow 0.
    }
Further, since $|\comp_{\max}|\convp \infty$ by assumption, we have that $|\comp_{\max}|\geq k$ whp, so that whp also
    \[
    \indicwo{\{|\mathscr{C}^{\sss-}_v|\geq k\}\cap \{|\mathscr{C}^{\sss+}_v|\geq k\}}
    -\indic{v\in \comp_{\max}}\geq 0.
    \]
As a result, using that $|\comp_v|\geq 1,$ on the high probability event that $|\comp_{\max}|\geq k$,
    \eqan{
    \label{hard-term-SCC}
    &\frac{1}{n}\sum_{v\in V(G_n)}\frac{1}{|\comp_v|}[\indicwo{\{|\mathscr{C}^{\sss-}_v|\geq k\}\cap \{|\mathscr{C}^{\sss+}_v|\geq k\}}
    -\indic{v\in \comp_{\max}}]\\
    &\qquad\leq \frac{1}{n}\sum_{v\in V(G_n)}[\indicwo{\{|\mathscr{C}^{\sss-}_v|\geq k\}\cap \{|\mathscr{C}^{\sss+}_v|\geq k\}}
    -\indic{v\in \comp_{\max}}]\nonumber\\
    &\qquad=\frac{1}{n}\big[Z_{\geq k}-|\comp_{\max}|].\nonumber
    }
By Theorem \ref{thm: Upper bound strong giant} and forward-backward local convergence in probability of $G_n$,
    \eqn{
    \frac{1}{n}\big[Z_{\geq k}-|\comp_{\max}|]\convp \zeta_{\geq k}-\zeta,
    }
which vanishes as $k\rightarrow \infty$. We conclude that also the left-hand side of \eqref{hard-term-SCC} is $o_{k,\sss \prob}(1)$, which, together with \eqref{easy-term-SCC}, completes the proof.
\qed
\subsection{Isolated vertices in digraphs and locally tree-like digraphs}
In this section, we investigate strongly isolated vertices, which are of interest because each such vertex contributes a unique strongly connected component. In locally tree-like graphs these are the major contributors when counting the number of strongly connected components, as we will show in this section.

\begin{definition}[Strongly isolated vertices]\label{def: Strongly isolated vertices}
   \rm A vertex $v\in V(G)$ is said to be \emph{strongly isolated} if $\mathscr{C}_v = \{v\}.$ We denote the set of strongly isolated vertices of $G$ by $V_1(G)$, and denote $\alpha_1(n) = |V_1(G)|$.
\end{definition}

We next study $\alpha_1(n):$
\begin{lemma}[Upper bound for the proportion of strongly isolated vertices]\label{lem: Upper bound strongly isolated vertices}
   Under the assumptions of Theorem \ref{thm: strong giant is local}, and for all $\varepsilon >0$,
    \begin{equation}
        \lim_{n\to \infty}\prob\left(\alpha_1(n) \leq 1-\zeta + \varepsilon\right) = 1, 
    \end{equation}
    where $\zeta = \prob(|\mathscr{C}_o^{\sss-}| = |\mathscr{C}_o^{\sss+}|=\infty)$.
\end{lemma}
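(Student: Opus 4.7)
The plan is to derive the bound as an immediate corollary of Theorem \ref{thm: strong giant is local}. The key observation is that any strongly connected component of size at least two is vertex-disjoint from the set of strongly isolated vertices; in particular, whenever $|\comp_{\max}|\geq 2$, no vertex of $\comp_{\max}$ can be strongly isolated, so that
\begin{equation}
|V_1(G_n)| \leq n - |\comp_{\max}|.
\end{equation}

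First I would dispose of the degenerate case $\zeta=0$. Since $\alpha_1(n)\leq 1$ holds deterministically, the inequality $\alpha_1(n)\leq 1-\zeta+\vep=1+\vep$ is trivial. Next I would treat the case $\zeta>0$. By Theorem \ref{thm: strong giant is local}, $|\comp_{\max}|/n\convp \zeta>0$, which in particular implies $\prob(|\comp_{\max}|\geq 2)\to 1$. On this high-probability event the displayed bound applies, and dividing by $n$ yields
\begin{equation}
\alpha_1(n) \;\leq\; 1 - \frac{|\comp_{\max}|}{n}.
\end{equation}
Combining with $|\comp_{\max}|/n\convp\zeta$ then gives, for every $\vep>0$,
\begin{equation}
\prob\bigl(\alpha_1(n)\leq 1-\zeta+\vep\bigr) \;\geq\; \prob\Bigl(\tfrac{|\comp_{\max}|}{n}\geq \zeta-\vep,\; |\comp_{\max}|\geq 2\Bigr) \;\to\; 1.
\end{equation}

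Because every step reduces to the law of large numbers for the strong giant already proved in Theorem \ref{thm: strong giant is local}, no substantial obstacle arises; the only care needed is to separate out the case $\zeta=0$, which is trivial. The role of the lemma is to isolate the part of the $1-|\comp_{\max}|/n$ bound on the complement of the giant that is relevant to counting strongly connected components: each strongly isolated vertex contributes a distinct SCC to $K_n$, so this upper bound is the natural input for comparing $\alpha_1(n)$ with $K_n/n$ in the subsequent locally tree-like analysis, where one expects $K_n/n$ and $\alpha_1(n)$ to have the same limit $1-\zeta$.
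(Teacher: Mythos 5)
Your proof is correct and takes essentially the same route as the paper: observe that strongly isolated vertices lie outside the strong giant, so $|V_1(G_n)| \leq n - |\comp_{\max}|$, divide by $n$, and invoke the law of large numbers $|\comp_{\max}|/n \convp \zeta$ from Theorem~\ref{thm: strong giant is local}. The only (minor) difference is that you separate out $\zeta=0$ and the possibility $|\comp_{\max}|=1$ (where the singleton ``giant'' is itself strongly isolated and the inequality fails by one), whereas the paper states the bound without these caveats; this extra care does not change the argument, since the $\varepsilon$-slack absorbs an $O(1)$ discrepancy after dividing by $n$.
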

\begin{proof}
   The number of isolated vertices cannot exceed the number of vertices outside the giant component, since the vertices in the giant cannot be isolated. This gives $\alpha_1(n) \leq n - |\comp_{\max}|$. Dividing by $n$ on both sides gives the required result.
\end{proof}

\begin{definition}[In- and out-neighbourhood of a fixed finite radius]\label{def: In and out neighbourhoods}
\rm Let $G$ be a digraph and $v \in V(G)$, then we let $B_k^{\sss-}(v)$ and $B_k^{\sss+}(v)$ to be the subgraphs induced by the vertices in the $k$ radius in- and out-neighbourhood of $v$, respectively, defined as
\begin{align}
    B_k^{\sss-}(v) = G\left[\left\{u \in V(G) \text{ s.t. } d_G(u, v) < k\right\}\right],\\ B_k^{\sss+}(v) = G\left[\left\{u \in V(G) \text{ s.t. } d_G(v, u) < k\right\}\right].
\end{align}
Similarly, we let $\partial B_k^{\sss-}(v)$ and $\partial B_k^{\sss+}(v)$ to be the boundary of $k$ radius in- and out-neighbourhoods of $v$, respectively, defined as 
\begin{align}
    \partial B_k^{\sss-}(v) = \left\{u \in V(G) \text{ s.t. } d_G(u, v) = k\right\},\\ \partial B_k^{\sss+}(v) = \left\{u \in V(G) \text{ s.t. } d_G(v, u) = k\right\}.
\end{align}
\hfill \ensymboldefinition
\end{definition}
We next turn to locally tree-like digraphs:
\begin{lemma}[Proportion of strongly isolated vertices in locally tree-like digraphs]\label{lem: number of strongly isolated vertices in locally tree-like graphs}
    Suppose that $G_n$ is a sequence of random digraphs such that $G_n$ converges forward-backward locally in probability to $(G, o)\sim \mu$, which is almost surely a random directed tree. Then, under the assumption of Theorem \ref{thm: strong giant is local},
    \begin{equation}
        \alpha_1(n) \overset{\sss\prob}{\longrightarrow} 1-\zeta.
    \end{equation}
\end{lemma}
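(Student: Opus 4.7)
My plan is to obtain the matching lower bound for $\alpha_1(n)$, since the upper bound $\alpha_1(n) \leq 1 - \zeta + o_{\sss\prob}(1)$ is already established in Lemma \ref{lem: Upper bound strongly isolated vertices}. Equivalently, I will show that
$$\frac{1}{n}\#\{v \in V(G_n)\colon |\comp_v| \geq 2\} \leq \zeta + o_{\sss\prob}(1).$$
The strategy is to split the count at a threshold $r$, treating vertices with $2 \leq |\comp_v| \leq r$ (``small'' SCCs) and those with $|\comp_v| > r$ (``large'' SCCs) separately.

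For the small-SCC piece, the key observation is that if $|\comp_v| = m \geq 2$, then for any $u \in \comp_v \setminus \{v\}$ the shortest paths $v \to u$ and $u \to v$ remain inside $\comp_v$ and each have length at most $m-1$, so concatenating and extracting a simple cycle yields a directed cycle through $v$ of length at most $2(m-1)$. Define the bounded indicator $C_R(G,v) := \1\{v \text{ lies on a directed cycle in } G \text{ of length} \leq R\}$, which is determined by $B_{R+1}^{\sss(G)}(v)$ and hence continuous in the forward-backward topology. With $R = 2r$,
$$\#\{v \colon 2 \leq |\comp_v| \leq r\} \leq \sum_{v \in V(G_n)} C_R(G_n, v).$$
By forward-backward local convergence in probability, and the fact that the limit $(G,o)$ is $\mu$-almost surely a directed tree (and thus contains no directed cycles),
$$\frac{1}{n}\sum_{v \in V(G_n)} C_R(G_n, v) \convp \mu(C_R(G, o)) = 0.$$

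For the large-SCC piece, I use that $\comp_v \subseteq \comp_v^{\sss-} \cap \comp_v^{\sss+}$, so the event $\{|\comp_v| > r\}$ is contained in $\{|\comp_v^{\sss-}|, |\comp_v^{\sss+}| \geq r+1\}$. Hence $\#\{v\colon |\comp_v| > r\} \leq Z_{\geq r+1}$, and the argument used in the proof of Theorem \ref{thm: Upper bound strong giant} yields $Z_{\geq r+1}/n \convp \zeta_{\geq r+1}$, with $\zeta_{\geq r+1} \to \zeta$ as $r \to \infty$.

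Combining the two estimates,
$$1 - \alpha_1(n) \leq \frac{1}{n}\sum_{v \in V(G_n)} C_R(G_n, v) + \frac{Z_{\geq r+1}}{n} = o_{\sss\prob}(1) + \zeta_{\geq r+1} + o_{\sss\prob}(1),$$
and first letting $n \to \infty$ and then $r \to \infty$ gives the lower bound. I expect no significant obstacle; the only point requiring care is verifying that $C_R$ is genuinely a forward-backward local function, which follows because any directed cycle of length $L$ through $v$ consists entirely of vertices at forward-distance $< L$ from $v$.
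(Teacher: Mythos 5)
Your proof is correct and follows essentially the same strategy as the paper: the paper lower-bounds $\alpha_1(n)$ by the local indicator $\1\{|B_k^{\sss-}(v)\cap B_k^{\sss+}(v)|=1,\ |\partial B_k^{\sss-}(v)||\partial B_k^{\sss+}(v)|=0\}$, whose negation splits by De Morgan into exactly your two cases (short directed cycle through $v$, respectively both in- and out-explorations reaching depth $k$, which is your $Z_{\geq r+1}$ term), and then combines with Lemma~\ref{lem: Upper bound strongly isolated vertices} for the matching upper bound. Your version makes the dichotomy ``small SCC forces a short cycle'' versus ``large SCC forces large in/out-components'' explicit, but the certifying local events, the use of the tree property to kill the cycle term, and the reliance on Theorem~\ref{thm: strong giant is local} via Lemma~\ref{lem: Upper bound strongly isolated vertices} are all the same.
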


\begin{proof}[Proof of Lemma \ref{lem: number of strongly isolated vertices in locally tree-like graphs}]
Fix a finite $k \in \mathbb{N}$. Then
     \begin{align}
     \begin{split}
        \alpha_1(n)&= \frac{1}{n}\sum_{v \in V(G_n)}
        \mathbbm{1}_{\{|\mathscr{C}_v| = 1\}}\\
        &\geq \frac{1}{n}\sum_{v \in V(G_n)}\mathbbm{1}_{\{|B_k^{\sss-}(v)\cap B_k^{\sss+}(v)|= 1, |\partial B_k^{\sss-}(v)||\partial B_k^{\sss+}(v)| = 0\}}\\
        &\overset{\sss\prob}{\longrightarrow} \mu(|B_k^{\sss-}(o)\cap B_k^{\sss+}(o)|= 1, |\partial B_k^{\sss-}(o)||\partial B_k^{\sss+}(o)| = 0).  
     \end{split}
    \end{align}
If the local limit is a directed tree, then $\mu(|B_k^{\sss-}(o)\cap B_k^{\sss+}(o)|= 1) =1$, since otherwise there will be a directed cycle in the local limit graph. Thus, we can write 
    \eqan{
   &\mu(|B_k^{\sss-}(o)\cap B_k^{\sss+}(o)|= 1, |\partial B_k^{\sss-}(o)||\partial B_k^{\sss+}(o)| = 0)\\
   &\qquad= \mu(|\partial B_k^{\sss-}(o)||\partial B_k^{\sss+}(o)| = 0).\nonumber
    }
Also, we know that, as $k\to \infty$,
\begin{equation}
    \mu( |\partial B_k^{\sss-}(o)||\partial B_k^{\sss+}(o)| = 0) \to \mu\left(\left\{|\mathscr{C}_o^{\sss-}|<\infty\right\} \cup \left\{|\mathscr{C}_o^{\sss+}|<\infty\right\}\right) = 1- \zeta.
\end{equation}
This completes the proof.
\end{proof}

\begin{proof}[Proof of Theorem \ref{thm: Number of scc in locally tree-like graph}] By the definition of $\alpha_1(n)$ in Definition \ref{def: Strongly isolated vertices},
\begin{equation}\label{eq: lower bound no. scc}
        \frac{K_n}{n}\geq \alpha_1(n), 
\end{equation}
 Also,    
\begin{equation}\label{eq: upper bound no. scc}
        K_n \leq 1 + n - |\mathscr{C}_{\max}|.
\end{equation}
From \eqref{eq: lower bound no. scc} and \eqref{eq: upper bound no. scc}, 
\begin{equation}\label{eq: bounds no. scc}
    \alpha_1(n) \leq \frac{K_n}{n} \leq 1 - \frac{|\mathscr{C}_{\max}|}{n}+\frac{1}{n}. 
\end{equation}
Lemma \ref{lem: number of strongly isolated vertices in locally tree-like graphs} and Theorem \ref{thm: strong giant is local} then complete the proof.
\end{proof}

\section{Discussion}\label{sec-discussion}

\subsection{Discussion results and literature} The main results in this paper concern the proportion of vertices in the LSCC and the number of connected components. We prove that these properties are `almost local'. Why are these connectivity properties important? Apart from indicating the connectivity of the graph, they are also used in the comparisons of centrality measures (see, e.g., \cite{mocanu2018decentralized}). This investigates how much a graph breaks down in terms of its connectivity, when we remove the most central nodes. We next list some open problems of our work.

In Theorem \ref{thm: limiting number of scc}, we prove that \eqref{eq: condition for giant} is a necessary and sufficient condition  for $|\mathscr{C}_{\max}|/n\convp \zeta=\mu(|\mathscr{C}^{\sss+}_o|=|\mathscr{C}^{\sss-}_o|=\infty)$ to hold. Of course, $\zeta\geq \zeta'\equiv \mu(|\mathscr{C}_o|=\infty)$. What is a necessary and sufficient condition for $|\mathscr{C}_{\max}|/n\convp \zeta'$ to hold? These conditions should really be different, since, for example, for locally tree-like digraphs, $\zeta'=0,$ while $|\mathscr{C}_{\max}|/n\convp \zeta=\mu(|\mathscr{C}^{\sss+}_o|=|\mathscr{C}^{\sss-}_o|=\infty)>0$ {\em can} hold.

Currently, to describe the number of strongly connected components in Theorem \ref{thm: limiting number of scc}, we rely on the necessary and sufficient condition \eqref{eq: condition for giant} for the proportion of vertices in the giant being equal to $\zeta=\mu(|\mathscr{C}^{\sss+}_o|=|\mathscr{C}^{\sss-}_o|=\infty)$. What is the necessary and sufficient condition for the convergence $K_n/n \overset{\sss\prob}{\to}\alpha$ to hold, and is it possible that $\alpha$ takes a different form than that on the right-hand side of \eqref{eq:number sccs almost local}?

\subsection{Large weak components}\label{sec-large-weak-components}
In this section, we discuss weak giants. The definition of a strong component in a digraph is well accepted. In contrast, the notion of a weak component has been used differently in the literature, as we discuss next.

\paragraph{\bf In- and out-components.} One could consider weak components to be in-and the out-components. For example, in \cite{MR2056402}, the authors study the largest in- and out-component sizes for the directed configuration model. We define the sizes of the largest in- and out-components $\mathscr{I}_{\max}$ and $\mathscr{O}_{\max}$ to be 
\begin{align}
    \mathscr{I}_{\max} = \max_{v\in V(G_n)}|\mathscr{C}^{\sss-}_v| \qquad \text{and} \qquad \mathscr{O}_{\max} = \max_{v\in V(G_n)}|\mathscr{C}^{\sss+}_v|.
\end{align}
The strong giant having linear size implies that its in- and out-components also have linear size, but the converse is not necessarily true. We visualize this in Figure \ref{Figure: representation}. 
\begin{figure}[ht!]
\centering
\begin{tikzpicture}[-latex ,auto ,node distance =1.5 cm and 2cm ,on grid ,
semithick ,
state/.style ={ circle ,top color =white , bottom color = processblue!20 ,
draw,processblue , text=blue , minimum width =0.2 cm}]
     \draw  \firstcircle node[above] {$\mathscr{C}_{\sss(1)}$};
     \draw [-stealth] (1,0) -- (2,0);
     \draw  \secondcircle node [above] {$\mathscr{C}_{\sss(2)}$}; 
   \node at (4.5, 0) {\ldots};
   \draw  \thirdcircle node [above] {$\mathscr{C}_{\sss(k-1)}$};
   \draw [-stealth] (7,0) -- (8,0);
     \draw  \fourthcircle node [above] {$\mathscr{C}_{\sss(k)}$};
\end{tikzpicture}
\caption{\small A linear-sized giant in- or out-component can exist despite the fact that there is no linear-sized strong giant.}\label{Figure: representation}
\end{figure}
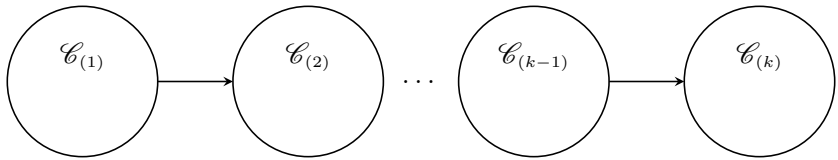
The big question is whether the maximal in- and out-components are `almost' local or not, and if so, under what condition. In particular, a natural question is to find a condition that guarantees that the proportion of vertices in $\mathscr{I}_{\max}$ is asymptotically equal to the proportion of vertices in the out-component of $\mathscr{C}_{\max}$. See Figure \ref{Fig: bowtie}, where LSCC stands for the largest strongly connected component. This means that the condition for the existence of weak giants should be weaker than the conditions given in Theorem \ref{thm: strong giant is local}.

\begin{figure}
\centering
\begin {tikzpicture}[-latex ,auto ,node distance =1.5 cm and 2cm ,on grid ,
semithick ,
state/.style ={ circle,top color =white , bottom color = black!20 ,
draw, black , text=blue , minimum width =0.2 cm}]
\draw \firstcircle node[above] {LSCC};
\draw (-0.866, -0.5) -- (-3, -1.5) -- (-3,1.5) -- (-0.866, 0.5);
\draw (-3, -1.5) -- (-0.866, -0.5);
\draw (-3, 0) -- (-1, 0) node[pos = 0.5]{\textit{Out}};
\draw  (0.866, -0.5) -- (3, -1.5);
\draw (3, 1.5) -- (3,-1.5);
\draw (0.866, 0.5)--(3, 1.5);
\draw (1, 0) -- (3, 0) node [pos = 0.5] {\textit{In}};
\end{tikzpicture}
\caption{\small The `bow-tie' figure used to describe the connectivity structure of large directed networks.}
\label{Fig: bowtie} 
\end{figure}
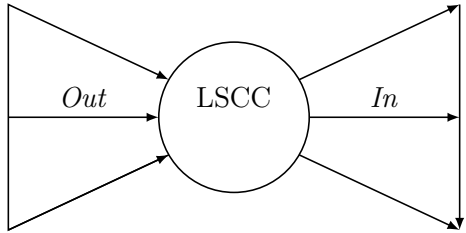
 For the in- and out-components, we believe it to be possible that $|\comp_{\max}^+|/n \overset{\sss\prob}{\to} \alpha \leq \mu(|\comp_o^-| = \infty)$ and vice-versa for the in-component. We state the following open problem:
\begin{openprob}
Find the necessary and sufficient conditions  for 
     \begin{align}
     \frac{|\comp_{\max}^+|}{n} \overset{\sss\prob}{\to} \alpha^+ \qquad \text{and} \qquad \frac{|\comp_{\max}^-|}{n} \overset{\sss\prob}{\to} \alpha^- 
     \end{align}
to hold.
\end{openprob}
When $|\comp_{\max}^+|/n \overset{\sss\prob}{\to} \alpha^+$, then one can easily deduce that $\alpha^+ \leq \mu(|\comp_o^-| = \infty)$. It is not clear when $\alpha^+ =\mu(|\comp_o^-| = \infty)$ holds.
One difficultly to prove such a result is that, unlike the strongly connected components, the in- and out-components do not \emph{partition} the vertex set.

\paragraph{\bf Ignoring the directions of edges.} 
One can also consider weak components to consist of the set of vertices that can be reached by recursively following all edges regardless of their orientations \cite{coulson2021weak, kryven2016emergence}. This corresponds to the connected components in the underlying undirected graph, i.e., the graph obtained after removing the edge directions. These types of components find applications in real-world problems, including epidemiology, data mining, communication networks, world wide web structure, etc. In this case, the results in \cite{Hofs21b} can be used to give a necessary and sufficient condition for the proportion of vertices in the giant to be the survival probability of the local limit.

\paragraph{\bf Weak components as a relation.} The 1972 paper by Graham, Knuth, and Motzkin \cite{donald1999art, graham1972complements} introduced a rather different notion of weak components. Formally, it can be defined by the following $4$ symmetric relations on the vertex set of a digraph: 
\begin{enumerate}
    \item For any two vertices $u$ and $v$ in $V(G)$, $u \iff v$ if both vertices are in the same strongly connected component, i.e., $\max\{d_G(u, v), d_G(v, u)\} < \infty$. In other words, there exists a path from $v$ to $u$ and vice versa.
    \item For any two vertices $u$ and $v$ in $V(G)$, $u\parallel v$ if $\min\{d_G(u, v), d_G(v, u)\} =\infty$, i.e., there is no path from $v$ to $u$ and from $u$ to $v$.
    \item For any two vertices $u$ and $v$ in $V(G)$, $u \approx v$ if either $u\iff v$ or $u \parallel v$.
    \item We define $R$ to be the transitive closure of $\approx$. This means that if there is a chain $(u=v_0, v_1, \ldots, v_k = v)$ such that $u\approx v_1 \approx  \cdots v_{k-1} \approx v$, then $uRv$.
\end{enumerate}
The relation $R$ is an equivalence relation and partitions the vertex set into the weak components. This concept of weak component is not a local quantity as a small perturbation in the graph can easily dramatically change the components in the graph. In particular, the addition of \emph{one} isolated vertex can change the definition of all these weak components. This suggests that, in terms of local convergence, this notion is too sensitive to local changes. Yet,
Pacault \cite{pacault1974computing} gives an algorithm for computing weak components. 

\bibliographystyle{amsplain}
\bibliography{bib}

\def\authornoop#1{}
\providecommand{\bysame}{\leavevmode\hbox to3em{\hrulefill}\thinspace}
\providecommand{\MR}{\relax\ifhmode\unskip\space\fi MR }
\providecommand{\MRhref}[2]{%
  \href{http://www.ams.org/mathscinet-getitem?mr=#1}{#2}
}
\providecommand{\href}[2]{#2}
\begin{thebibliography}{10}

\bibitem{AldSte04}
David Aldous and J.~Michael Steele, \emph{The objective method: probabilistic
  combinatorial optimization and local weak convergence}, Probability on
  discrete structures, Encyclopaedia Math. Sci., vol. 110, Springer, Berlin,
  2004, pp.~1--72. \MR{2023650}

\bibitem{BenSch01}
Itai Benjamini and Oded Schramm, \emph{Recurrence of distributional limits of
  finite planar graphs}, Electron. J. Probab. \textbf{6} (2001), no. 23, 13.
  \MR{1873300}

\bibitem{MR4533728}
Xing~Shi Cai and Guillem Perarnau, \emph{The diameter of the directed
  configuration model}, Ann. Inst. Henri Poincar\'{e} Probab. Stat. \textbf{59}
  (2023), no.~1, 244--270. \MR{4533728}

\bibitem{MR2056402}
Colin Cooper and Alan Frieze, \emph{The size of the largest strongly connected
  component of a random digraph with a given degree sequence}, Combin. Probab.
  Comput. \textbf{13} (2004), no.~3, 319--337. \MR{2056402}

\bibitem{coulson2021weak}
Matthew Coulson and Guillem Perarnau, \emph{Weak components of the directed
  configuration model}, Extended Abstracts EuroComb 2021: European Conference
  on Combinatorics, Graph Theory and Applications, Springer, 2021,
  pp.~682--687.

\bibitem{GarHofLit20}
Alessandro Garavaglia, Remco van~der Hofstad, and Nelly Litvak, \emph{Local
  weak convergence for {P}age{R}ank}, Ann. Appl. Probab. \textbf{{\bf 30}}
  (2020), no.~1, 40--79. \MR{4068306}

\bibitem{graham1972complements}
Ronald~L Graham, Donald~E Knuth, and TS~Motzkin, \emph{Complements and
  transitive closures.}, Discret. Math. \textbf{2} (1972), no.~1, 17--29.

\bibitem{Hofs21b}
Remco van~der Hofstad, \emph{The giant in random graphs is almost local},
  arXiv:2103.11733 [math.PR], 2021.

\bibitem{Hofs24}
{\authornoop{}}Remco van~der Hofstad, \emph{Random graphs and complex networks.
  {V}ol. 2}, Cambridge Series in Statistical and Probabilistic Mathematics,
  Cambridge University Press, 2024. \MR{3617364}

\bibitem{HooOlv18}
Pim~van~der Hoorn and M.~Olvera-Cravioto, \emph{Typical distances in the
  directed configuration model}, Ann. Appl. Probab. \textbf{{\bf 28}} (2018),
  no.~3, 1739--1792. \MR{3809476}

\bibitem{van2023percolation}
Femke~van Ieperen and Ivan Kryven, \emph{Percolation in simple directed random
  graphs with a given degree distribution}, Probability in the Engineering and
  Informational Sciences (2023), 1--22.

\bibitem{donald1999art}
Donald~E. Knuth, \emph{The art of computer programming}, Sorting and searching
  \textbf{3} (1999), no.~426-458, 4.

\bibitem{kryven2016emergence}
Ivan Kryven, \emph{Emergence of the giant weak component in directed random
  graphs with arbitrary degree distributions}, Physical Review E \textbf{94}
  (2016), no.~1, 012315.

\bibitem{mocanu2018decentralized}
Decebal~Constantin Mocanu, Georgios Exarchakos, and Antonio Liotta,
  \emph{Decentralized dynamic understanding of hidden relations in complex
  networks}, Scientific Reports \textbf{8} (2018), no.~1, 1571.

\bibitem{pacault1974computing}
Jean~Francois Pacault, \emph{Computing the weak components of a directed
  graph}, SIAM Journal on Computing \textbf{3} (1974), no.~1, 56--61.

\end{thebibliography}




\end{document}